\newtheorem{theorem}{Theorem}[section]
\newtheorem{lemma}[theorem]{Lemma}
\newtheorem{corollary}[theorem]{Corollary}
\newtheorem{remark}[theorem]{Remark}
\newtheorem{example}[theorem]{Example}
\newtheorem{assumption}[theorem]{Assumption}
\begin{document}

\title{On the ergodicity of certain Markov chains in random environments}

\author{Bal\'azs Gerencs\'er\thanks{Alfr\'ed R\'enyi Institute of Mathematics and E\"otv\"os Lor\'and University, Budapest, Hungary}
\and Mikl\'os R\'asonyi\thanks{Alfr\'ed R\'enyi Institute of Mathematics, Budapest, Hungary}}

\date{\today}

\maketitle

\begin{abstract}
We study the ergodic behaviour of a discrete-time process $X$ which is a Markov chain 
in a stationary random environment. The laws of $X_t$ are shown to converge
to a limiting law in (weighted) total variation distance as $t\to\infty$.
Convergence speed is estimated and an ergodic theorem is established for functionals of $X$. 

Our hypotheses on $X$ combine the standard ``drift'' and ``small set'' conditions for geometrically ergodic Markov chains
with conditions on the growth rate of a certain ``maximal process'' of the random environment.
We are able to cover a wide range of models that have heretofore been
untractable. In particular, our results are pertinent to difference equations modulated by
a stationary (Gaussian) process. Such equations arise in applications such as 
discretized stochastic volatility models
of mathematical finance. 
\end{abstract}

\section{Introduction}\label{intro}

Markov chains in random environments (\emph{recursive chains} in the terminology of \cite{borovkov})
were systematically studied on countable state spaces in
e.g.\ \cite{c1,c2,o}. However, papers on the ergodic properties of such processes on a general state space
are scarce and require rather strong, Doeblin-type conditions, see \cite{kifer1,kifer,sep}. An exception
is \cite{stenflo}, where the system dynamics is assumed to be contracting instead. This is also
rather restrictive an assumption and 
only weak convergence of the laws can be established.

In this paper we deal with Markov chains in random environments that satisfy refinements of the usual
hypotheses for the geometric ergodicity of Markov chains: minorization on ``small sets'', see Chapter 5
of \cite{mt}, and Foster--Lyapunov type ``drift'' conditions, see Chapter 15 of \cite{mt}. 

Assuming that a suitably defined maximal process of the random environment satisfies a tail estimate,
we manage to establish stochastic stability. We use certain ideas of M. Hairer and J. Mattingly (see \cite{hm}) to obtain convergence 
to a limiting distribution in total variation norm with 
estimates on the convergence rate, see Section \ref{mar} for the statements of our results. 
We also present a method to prove ergodic theorems, exploiting ideas of 
\cite{bm,bw,gmmtv,msg}. An important technical ingredient is the notion of $L$-mixing, see Section \ref{lm}. 

We present difference equations modulated by Gaussian processes in Section \ref{dif}, to which our results apply. 
These can be regarded as discretizations
of diffusions in random environments which arise, for instance, in stochastic volatility models of mathematical finance, 
see \cite{cr} and \cite{gjr}. These examples allow to demonstrate the power of our approach, hopefully to be 
followed by other applications as our main purpose is to exhibit a versatile \emph{method} to be used in the future.
Proofs appear in Sections \ref{prof}, \ref{prof1} and \ref{mehi}.

\section{Main results}\label{mar}

Let $\mathcal{Y}$ be a Polish space with its Borel sigma-field $\mathfrak{A}$ and let $Y_t$, $t\in\mathbb{Z}$
be a (strongly) stationary $\mathcal{Y}$-valued process on some probability space
$(\Omega,\mathcal{F},P)$. A generic element of $\Omega$ will be denoted by $\omega$.

Expectation of a real-valued random variable $X$ with respect to $P$ will be
denoted by $E[X]$ in the sequel. For $1\leq p<\infty$ we write $L^p$ to denote the Banach space of 
(a.s.\ equivalence classes of)
$\mathbb{R}$-valued random variables with $E[|X|^p]<\infty$, equipped with the usual norm. 

We fix another Polish space $\mathcal{X}$ with its Borel sigma-field $\mathfrak{B}$ and denote by $\mathcal{P}(\mathcal{X})$ the set of 
probability measures on $\mathfrak{B}$.
Let $Q:\mathcal{Y}\times\mathcal{X}\times \mathfrak{B}\to [0,1]$ be a family of probabilistic kernels parametrized by 
$y\in\mathcal{Y}$, i.e. for all $A\in \mathfrak{B}$,
$Q(\cdot,\cdot,A)$ is $\mathfrak{A}\otimes\mathfrak{B}$-measurable 
and for all $y\in\mathcal{Y}$, $x\in\mathcal{X}$, $A\to Q(y,x,A)$
is a probability on $\mathfrak{B}$. 

Let ${X}_t$, $t\in\mathbb{N}$ be a $\mathcal{X}$-valued stochastic
process such that 
\begin{equation}\label{recu}
P({X}_{t+1}\in A\vert\mathcal{F}_t)=Q(Y_{t},X_t,A)\ P\mbox{-a.s.},\ t\geq 0, 
\end{equation}
where the filtration is defined by
$$
\mathcal{F}_t:=\sigma(Y_j,\ j\in\mathbb{Z};\ X_j,\ 0\leq j\leq t),\ t\geq 0.
$$

\begin{remark} {\rm Obviously, the law of 
${X}_t$, 
$t\in\mathbb{N}$ (and also its joint law with 
$Y_t$, $t\in\mathbb{Z}$) are
uniquely determined by \eqref{recu}.
For every given $Q$,
there exists a process ${X}$ satisfying \eqref{recu} (after possibly enlarging the probability space).  
See e.g.\ page 228 of \cite{bwbook} for a similar construction. We will establish a more
precise result in Lemma \ref{t} below, under additional assumptions.}
\end{remark}

The process $Y$ will represent
the random environment whose state $Y_t$ at time $t$ determines the transition law 
$Q(Y_t,\cdot,\cdot)$ of the process $X$ at the given instant $t$.
Our purpose is to study the ergodic properties of $X$. 

We will now introduce a number of assumptions of various kinds that will figure in the
statements of the main results: Theorems \ref{limit}, \ref{limit2}, \ref{lln} and \ref{llnn} below.

The following assumption closely resembles the well-known drift conditions
for geometrically ergodic Markov chains, see e.g.\ Chapter 15 of \cite{mt}. In our
case, however, they are relaxed by also having dependence on the state of the random environment.

\begin{assumption}\label{lyapunov} (Drift condition) Let 
$V:\mathcal{X}\to \mathbb{R}_{+}$ be a measurable function. Let $A_n\in\mathfrak{A}$, $n\in\mathbb{N}$ be a non-decreasing sequence of
subsets such that $A_0\neq\emptyset$ and $\mathcal{Y}=\cup_{n\in\mathbb{N}}A_n$.
Define the $\mathbb{N}$-valued function 
\[
\Vert y\Vert:=\min\{n:\, y\in A_n\},\ y\in\mathcal{Y}.
\]
We assume that there is a non-increasing function $\lambda:\mathbb{N}\to (0,1]$ and a non-decreasing function $K:\mathbb{N}\to (0,\infty)$
such that, for all $x\in\mathcal{X}$ and $y\in\mathcal{Y}$,
\begin{equation}\label{lyapi}
\int_{\mathcal{X}} V(z)\, Q(y,x,dz)\leq (1-\lambda(\Vert y\Vert))V(x)+ K(\Vert y\Vert). 
\end{equation}
Furthermore, we may and will assume $\lambda(\cdot)\leq 1/3$ and
$K(\cdot)\geq 1$. 
\end{assumption}

We provide some intuition about Assumption \ref{lyapunov}: we expect that the stochastic process $X$ behaves in 
an increasingly arbitrary way as the random environment $Y$ becomes more and more ``extreme'' (i.e.\ $\Vert Y\Vert$ grows) so
the drift condition \eqref{lyapi} becomes less and less stringent (i.e.\ $\lambda(\Vert Y\Vert)$ decreases).

\begin{example}\label{ppp}
{\rm A typical case is where $\mathcal{Y}$ is a subset of a Banach space $\mathbb{B}$ with norm 
$\Vert\cdot\Vert_{\mathbb{B}}$; $\mathfrak{A}$ its Borel field; $A_n:=\{y\in\mathcal{Y}:\, 
\Vert y\Vert_{\mathbb{B}}\leq n\}$, $n\in\mathbb{N}$. In this setting 
$$
\Vert y\Vert=\left\lceil \Vert y\Vert_{\mathbb{B}} \right\rceil,
$$ 
where $\lceil\cdot\rceil$ stands for the ceiling function.
In the examples of the present paper we will always have $\mathbb{B}=\mathbb{R}^d$ with some $d\geq 1$ and 
$|\cdot|=\Vert\cdot\Vert_{\mathbb{B}}$ will denote the respective Euclidean norm.}


\end{example}

\begin{remark}{\rm It would be desirable
to relax Assumption \ref{lyapunov} allowing $\lambda$ to vary in $(-\infty,1)$ as long as ``in the average'' it is contractive (there are multiple 
options for the precise formulation of such a property). This is, however, out of scope for the current work.} 
\end{remark}

The next assumption stipulates the existence of a whole family of suitable ``small sets'' $C(R(n))$ 
that fit well the sets $A_n$ appearing in Assumption \ref{lyapunov}. 

\begin{assumption}\label{small} (Minorization condition) For $R\geq 0$, set 
$C(R):=\{x\in\mathcal{X}:\ V(x)\leq R\}$. 
Let $\lambda(\cdot)$, $K(\cdot)$ be as in Assumption \ref{lyapunov}.
Define $R(n):=4K(n)/\lambda(n)$.
There is a non-increasing function $\alpha:\mathbb{N}\to (0,1]$ and for each $n\in\mathbb{N}$, there exists a 
probability measure $\nu_n$ on $\mathfrak{B}$ 
such that, for all $y\in\mathcal{Y}$, $x\in C(R(\Vert y\Vert))$
and $A\in\mathfrak{B}$,
\begin{equation}\label{mey}
Q(y,x,A)\geq \alpha(\Vert y\Vert)\nu_{\Vert y\Vert}(A).
\end{equation}
We may and will assume $\alpha(\cdot)\leq 1/3$.
\end{assumption}

In other words,
depending on the ``size'' $\Vert y\Vert$ of state $y$ of the random environment, we work on the set $C(4K(\Vert y\Vert)/\lambda(\Vert y\Vert))$ on which we are able to benefit 
from a ``coupling effect'' of 
strength $\alpha(\Vert y\Vert)$.

For a fixed $V$ as in Assumption \ref{lyapunov}, let us define a family of metrics on
\[
\mathcal{P}_V(\mathcal{X}):=\left\{\mu\in\mathcal{P}(\mathcal{X}):\, \int_{\mathcal{X}} V(x)\, \mu(dx)
<\infty\right\}
\]
by 
\[
\rho_{\beta}(\nu_1,\nu_2):=\int_{\mathcal{X}} [1+\beta V(x)]\vert \nu_1-\nu_2\vert(dx),\ 
\nu_1,\nu_2\in\mathcal{P}_V(\mathcal{X}),
\]
for each $0\leq\beta\leq 1$. Here $\vert \nu_1-\nu_2\vert$ is the total variation of
the signed measure $\nu_1-\nu_2$.
Note that $\rho_0$ is just the total variation distance (and it can be defined for all 
$\nu_1,\nu_2\in\mathcal{P}(\mathcal{X})$)  while 
$\rho_1$ is the $(1+V)$-weighted total variation distance.

For a measurable $f:\mathcal{X}\to\mathbb{R}_{+}$ we define $\Phi(f)$ to be the set of measurable $\phi:\mathcal{X}\to \mathbb{R}$
such that $|\phi(z)|\leq C(1+f(z))$, $z\in\mathcal{X}$ holds for some constant $C=C(\phi)$. Hence $\Phi(1)$ denotes the set of
bounded, measurable functions on $\mathcal{X}$.

Let $L:\mathcal{X}\times\mathfrak{B}\to [0,1]$ be a probabilistic kernel. 
For each $\mu\in\mathcal{P}(\mathcal{X})$, we define the probability  
\begin{equation}\label{taho}
[L\mu](A):=\int_{\mathcal{X}} L(x,A)\, \mu(dx),\ A\in\mathfrak{B}.
\end{equation}
Consistently with these definitions, $Q(Y_n)\mu$ 
will refer
to the action of the kernel $Q(Y_n,\cdot,\cdot)$ on $\mu$.
Note, however, that $Q(Y_n)\mu$ is a \emph{random} probability measure.

For a bounded measurable function $\phi:\mathcal{X}\to\mathbb{R}$,
we set 
\[
[L\phi](x):=\int_{\mathcal{X}}\phi(z)\, L(x,dz),\ x\in\mathcal{X}.
\]
The latter definition makes sense for any non-negative measurable $\phi$, too.

Introduce the notation $\mu_t:=\mathrm{Law}(X_t)$, $t\in\mathbb{N}$.
The following assumption is an integrability condition about the initial values $X_0$ and $X_1$ of the process $X$.
\begin{assumption}\label{init} (Moment condition on the initial values)
\begin{equation*}
E[V^{2}(X_{0})+V^{2}(X_{1})]<\infty.
\end{equation*}
\end{assumption}

We now present a hypothesis controlling the maxima of $\Vert Y\Vert$ over finite time intervals (i.e. the ``degree of extremity''
of the random environment). 

\begin{assumption}\label{stary} (Condition on the maximal process of the
random environment)
There exist a non-decreasing function $g:\mathbb{N}\to\mathbb{N}$ and a non-increasing function $\ell:\mathbb{N}\to [0,1]$ such that
\begin{equation}\label{mata}
P\left(\max_{1\leq i\leq t}\Vert Y_i\Vert\geq g(t)\right)\leq \ell(t),\ t\geq 1.
\end{equation}
\end{assumption}

\begin{remark} {\rm It is clear that for a given process $Y$, several choices for the pair of 
functions $g,\ell$ are possible.
Each of these leads to different estimates and it depends on $Y$ and $X$ which choice is better, no general
rule can be determined a priori.}
\end{remark}

\begin{remark} {\rm For Gaussian processes $Y$ in $\mathcal{Y}=\mathbb{R}^d$, Assumption \ref{stary} holds, for instance,
with $g(t)\sim \sqrt{t}$ and $\ell(t)$ eponentially decreasing, see Section \ref{dif} for more details.}
\end{remark}

\begin{remark}\label{l+} {\rm One can derive estimates like \eqref{mata} also for rather general processes $Y$. For instance, 
let $Y_t$, $t\in\mathbb{Z}$ be $\mathbb{R}^d$-valued strongly stationary
such that $E|Y_0|^p<\infty$ for all $p\geq 1$. Then for each $q\geq 1$ set $p=2q$ and estimate 
\begin{eqnarray*}
& & E^{1/q}\left[\max_{1\leq i\leq t}|Y_i|^q\right]\leq E^{1/2q}\left[\max_{1\leq i\leq t}|Y_i|^{2q}\right]\\
&\leq& E^{1/2q}\left[\sum_{i=1}^t|Y_i|^{2q}\right]\leq  C(q) t^{\frac{1}{2q}},
\end{eqnarray*}
with constant $C(q)=E^{1/2q}[|Y_0|^{2q}]$. The Markov inequality implies that
\begin{equation}\label{trisk}
P\left(\max_{1\leq i\leq t}|Y_i|\geq t\right)\leq \frac{C^q(q) t^{1/2}}{t^q}\leq \frac{C^q(q)}{t^{q-1/2}}.
\end{equation}
Actually, for arbitrarily small $\chi>0$ and arbitrarily large $r\geq 1$, we can set $q=\frac{r}{\chi}+\frac{1}{2}$
in \eqref{trisk} and then Assumption \ref{stary} holds with 
$$
g(t)=\lceil t^{\chi}\rceil\mbox{ and }
\ell(t)=\frac{C^{q}(q)}{t^r},\ t\geq 1,
$$
i.e.\ for arbitrary polynomially growing $g(\cdot)$ and polynomially
decreasing $\ell(\cdot)$. 
This shows that our main results below have a 
wide spectrum of applicability 
well beyond the case of Gaussian $Y$, see also Example \ref{mbaker} below.}
\end{remark}

We now define a number of quantities that will appear in various convergence rate estimates below. For each $t\in\mathbb{N}$, set

\begin{eqnarray*}
r_1(t) &:=& \sum_{k=t}^{\infty} \frac{K(g(k))}{\alpha(g(k))}e^{-k\alpha(g(k))\lambda(g(k))/2},\\
r_2(t) &:=& \sum_{k=t}^{\infty} \frac{K(g(k+1))}{\alpha^2(g(k+1))\lambda(g(k+1))}\sqrt{\ell(k)},\\
r_3(t) &:=& \sum_{k=t}^{\infty} e^{-k\alpha(g(k))\lambda(g(k))/2},\\
r_4(t) &:=& \sum_{k=t}^{\infty} \ell(k),\\
\pi(t) &:=& \frac{|\ln(\lambda(g(t)))|}{\alpha(g(t))\lambda(g(t))}.
\end{eqnarray*}

Now comes the first main result of the present paper: assuming our conditions on drift, minorization, initial
values and control of the maxima, $\mu_t$ will tend to a limiting law as 
$t\to\infty$, provided that $r_1(0)$
and $r_2(0)$ are finite.

\begin{theorem}\label{limit} Let Assumptions \ref{lyapunov}, \ref{small} and \ref{stary}  
be in force. Assume 
\begin{equation}\label{psota1}
r_1(0)+r_2(0)<\infty.
\end{equation} 
Then there is a probability
$\mu_*$ on $\mathcal{X}$ such that, for any $X_{0}$ satisfying Assumption \ref{init},
$\mu_t\to \mu_*$ in $(1+V)$-weighted
total variation as $t\to\infty$.
More precisely,
\[
\rho_1(\mu_t,\mu_*)\leq C[r_1(t)+r_2(t)],\ t\in\mathbb{N},
\] 
for some constant $C>0$. The limit $\mu_{*}$ does not depend on $X_{0}$.
\end{theorem}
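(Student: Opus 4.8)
The plan is to build a coupling-based Hairer--Mattingly contraction in the weighted total variation distance $\rho_\beta$, but with all constants \emph{frozen} according to the current value of the maximal process $\max_i\|Y_i\|$. Concretely, I would first prove a one-step contraction lemma: conditional on the environment, if $y$ has norm $\|y\|=n$, then the kernel $Q(y,\cdot)$ contracts $\rho_{\beta_n}$ for a suitable $\beta_n$ (roughly $\beta_n\sim \alpha(n)\lambda(n)/K(n)$) at a rate $1-c\,\alpha(n)\lambda(n)$. This is the standard Hairer--Mattingly argument (split into the ``far apart in $V$'' case, handled by the drift condition \eqref{lyapi}, and the ``both in the small set $C(R(n))$'' case, handled by the minorization \eqref{mey}), and the choice $R(n)=4K(n)/\lambda(n)$ is exactly what makes the two cases balance. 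I expect this step to be essentially a careful bookkeeping of how the constants depend on $n$ through $\lambda,K,\alpha$.

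Next I would iterate this contraction along a typical environment path. The key difficulty is that the contraction \emph{rate} and the reference weight $\beta_n$ both depend on the (random, time-varying) environment, so one cannot simply multiply fixed contraction factors. My plan is to work on the good event $G_t:=\{\max_{1\le i\le t}\|Y_i\|< g(t)\}$, whose complement has probability at most $\ell(t)$ by Assumption \ref{stary}. On $G_t$ all the environment norms up to time $t$ are bounded by $g(t)$, so I may use the \emph{worst-case} (i.e.\ largest) constants $\lambda(g(t)),K(g(t)),\alpha(g(t))$ uniformly, giving a genuine product of identical contraction factors $(1-c\,\alpha(g(t))\lambda(g(t)))^t \le e^{-ct\alpha(g(t))\lambda(g(t))}$; this is precisely the exponent appearing in $r_1$ and $r_3$. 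The contribution of the bad event $G_t^c$ is controlled by $\sqrt{\ell(t)}$ after a Cauchy--Schwarz step that pairs the indicator $1_{G_t^c}$ against the $L^2$ bound on $V(X_t)$ supplied by Assumption \ref{init} together with the drift condition; this is where $r_2$ (which carries the $\sqrt{\ell(k)}$ factor and a second power of $\alpha,\lambda$ in the denominator) originates.

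The limit measure $\mu_*$ I would construct as follows. Let $\mu_t^{(s)}$ denote the law at time $t$ of a chain started at time $-s$ from a fixed reference point (using stationarity of $Y$ to shift time), and show via the above contraction-plus-good-event estimate that $(\mu_0^{(s)})_{s}$ is Cauchy in $\rho_1$; its limit is $\mu_*$. Stationarity of $Y$ guarantees that the pulled-back estimates do not depend on the absolute time origin, which is what makes $\mu_*$ well defined and independent of $X_0$. The summation over the dyadic-type blocks $k\ge t$, combining the geometric factor from the good event with the $\sqrt{\ell}$ factor from the bad event, reassembles exactly into the tails $r_1(t)+r_2(t)$, yielding the quantitative bound $\rho_1(\mu_t,\mu_*)\le C[r_1(t)+r_2(t)]$; finiteness at $t=0$ is hypothesis \eqref{psota1}, which is what makes the series converge at all.

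The main obstacle, I expect, is the interplay between the \emph{moving} reference weight $\beta_n$ and the iteration: a clean telescoping requires controlling how $\rho_{\beta}$ for different $\beta$'s compare, and ensuring that replacing the step-dependent $\beta_n$ by a single $\beta$ adapted to $g(t)$ costs only a controllable factor (absorbed into $K/\alpha$ and the constant $C$). Handling the bad-event remainder uniformly over the iteration --- so that early large excursions of $\|Y\|$ do not destroy the contraction accumulated afterwards --- is the delicate point, and this is presumably why the estimate must be phrased through the \emph{maximal} process rather than through one-step tail bounds on $\|Y_i\|$.
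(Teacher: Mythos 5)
Your proposal is correct and follows essentially the same route as the paper's proof: a Hairer--Mattingly one-step contraction in $\rho_\beta$ with environment-dependent constants, iterated along the path with the constants frozen at their worst-case values $\lambda(g(\cdot)),K(g(\cdot)),\alpha(g(\cdot))$ on the good event of the maximal process, a Cauchy--Schwarz step pairing the bad event against the second moments from Assumption \ref{init} (producing the $\sqrt{\ell}$ and $\alpha^{2}\lambda$ structure of $r_2$), and a pull-back (start the chain further and further in the past, using stationarity of $Y$) to obtain a Cauchy sequence in the complete metric $\rho_1$ whose limit is $\mu_*$. The only cosmetic difference is that the paper pulls back the conditional initial law $\tilde{\mu}_0(\hat{\mathbf{y}}_{-n+1})$ rather than a fixed reference point, and handles the moving weight $\beta$ exactly as you anticipate, via $\rho_1\leq(1+1/\beta)\rho_\beta$ at the cost of a factor $3K/\alpha$.
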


Theorem \ref{limit2} below is just a variant of Theorem \ref{limit}: relaxing the assumptions it
provides convergence in a weaker sense.

\begin{assumption} (Weaker moment condition on the initial values)\label{manuela}
\begin{equation*}
E[V(X_{0})+V(X_{1})]<\infty.
\end{equation*}
\end{assumption}

\begin{theorem}\label{limit2} Let Assumptions \ref{lyapunov}, \ref{small} and \ref{stary} be in force. 
Assume
\begin{equation}\label{psota2}
r_3(0)+r_4(0)<\infty.
\end{equation} 
Then there is a probability
$\mu_*$ on $\mathcal{X}$ such that $\mu_t\to \mu_*$ in total variation as $t\to\infty$ for every
$X_{0}$ satisfying Assumption \ref{manuela}.
More precisely,
\begin{equation}\label{ob}
\rho_0(\mu_t,\mu_*)\leq C[r_3(t)+r_4(t)],\ t\in\mathbb{N},
\end{equation}
for some constant $C>0$. 
\end{theorem}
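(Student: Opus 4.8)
The plan is to build the invariant law $\mu_*$ and control the distance $\rho_0(\mu_t,\mu_*)$ by mimicking the Hairer–Mattingly approach, but adapted so that the contraction rate at each step depends on the (random) size $\Vert Y_t\Vert$ of the environment. The central mechanism is a coupling of two copies of $X$ driven by the \emph{same} environment realization $(Y_t)$, started from two initial laws. At each step, on the small set $C(R(\Vert Y_t\Vert))$ the minorization \eqref{mey} gives a chance $\alpha(\Vert Y_t\Vert)$ of coalescence, while off the small set the drift \eqref{lyapi} pulls $V$ back down; combining these in the usual way yields a one-step contraction of a suitably chosen semimetric (of the form $d_\beta(x,x')=\mathbf{1}_{x\neq x'}[1+\beta V(x)+\beta V(x')]$) with a factor that degrades as $\Vert Y_t\Vert$ grows. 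I expect this one-step contraction estimate to be available from the proof of Theorem~\ref{limit}, since that theorem already establishes the $\rho_1$-version under the stronger moment assumption; here I only need the $\rho_0$ part, so I would isolate the bound on the coalescence probability rather than the full weighted distance.

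The second ingredient is to pass from the random, path-dependent contraction factors to a deterministic convergence rate. Over a block of $k$ steps the product of per-step contraction factors is roughly $\exp(-\sum_i \alpha(\Vert Y_i\Vert)\lambda(\Vert Y_i\Vert))$; on the event $\{\max_{1\le i\le k}\Vert Y_i\Vert < g(k)\}$, whose complement has probability at most $\ell(k)$ by Assumption~\ref{stary}, every factor is bounded below by $\alpha(g(k))\lambda(g(k))$ because $\alpha,\lambda$ are non-increasing and $K$ non-decreasing. This is exactly why the quantity $r_3(t)=\sum_{k\ge t} e^{-k\alpha(g(k))\lambda(g(k))/2}$ captures the coupling success on the ``good'' maximal-process event, while $r_4(t)=\sum_{k\ge t}\ell(k)$ captures the total-variation leakage coming from the ``bad'' event where the environment is too extreme for the contraction to be effective. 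Splitting the expected coalescence bound across these two events and summing the geometric-type contributions should produce precisely the estimate \eqref{ob}.

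To construct $\mu_*$ itself I would use a Cauchy argument: show that the laws $\mu_t$ form a Cauchy sequence in $\rho_0$ by comparing chains started at times $0$ and $s$ (equivalently, couple $\mu_t$ with $\mu_{t+s}$ and apply the same block estimate), so that $\rho_0(\mu_t,\mu_{t+s})\le C[r_3(t)+r_4(t)]$ uniformly in $s$. Completeness of $\mathcal{P}(\mathcal{X})$ under total variation then yields a limit $\mu_*$, and letting $s\to\infty$ upgrades the Cauchy bound to \eqref{ob}. The weaker moment Assumption~\ref{manuela} (only first moments of $V(X_0),V(X_1)$) suffices here precisely because the target metric is unweighted $\rho_0$: the drift condition is used only to guarantee that the chain visits the small sets often enough, and a single power of $V$ controls the time spent outside $C(R(\Vert Y\Vert))$ via a Markov-type estimate.

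The main obstacle will be the interplay between the time-inhomogeneity of the contraction and the randomness of the environment in the block estimate. Because $\lambda,\alpha,K$ are evaluated at the fluctuating $\Vert Y_i\Vert$, the per-step factors are neither independent nor identically distributed, so I cannot simply multiply expectations; instead I must first condition on the entire environment path $(Y_i)_{i\in\mathbb{Z}}$, obtain a pathwise contraction bound, and only then take expectations, carefully using the monotonicity of $\lambda,\alpha,K$ together with the tail bound \eqref{mata} to replace the random argument by the deterministic $g(k)$ on the good event. Managing this conditioning cleanly—ensuring that the minorization measures $\nu_{\Vert Y_i\Vert}$ and the coupling construction remain measurable with respect to the environment—is the delicate technical core; the rest reduces to bookkeeping of the two summable series $r_3$ and $r_4$.
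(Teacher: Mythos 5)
Your contraction-plus-splitting machinery is exactly the paper's: the one-step Hairer--Mattingly bound with environment-dependent constants is Lemma \ref{lett}; the pathwise $n$-fold contraction conditional on the environment path, the split along the event $\{\max_i\Vert Y_i\Vert<g(n)\}$ from Assumption \ref{stary}, and the use of $\rho_0\le 2$ on the bad event (which is precisely why the first-moment Assumption \ref{manuela} suffices and why $r_4(t)=\sum_{k\ge t}\ell(k)$ enters without square roots or weight factors) all match the actual proof, as does the final Cauchy-in-$\rho_0$ argument.

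The gap is in how you set up the Cauchy comparison. You propose to ``couple $\mu_t$ with $\mu_{t+s}$'' by running two copies of $X$ from two initial laws, driven by the same environment realization. But conditionally on $\mathbf{Y}$, the law of $X_{t+s}$ is obtained by applying $Q(Y_{t+s-1})\cdots Q(Y_s)$ to the conditional law of $X_s$: the final $t$ kernels of the second chain are $Q(Y_s),\dots,Q(Y_{t+s-1})$, not $Q(Y_0),\dots,Q(Y_{t-1})$. Hence the two measures you wish to compare are \emph{not} images of two initial laws under the same random kernel sequence, and the pathwise contraction bound cannot be applied to the pair; conversely, if you force both chains to use the kernels $Q(Y_0),\dots,Q(Y_{t-1})$, the second chain's time-$t$ marginal is no longer $\mu_{t+s}$ (you would be using the segment $Y_0,\dots,Y_{s-1}$ twice). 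In the homogeneous Markov case this issue is invisible, since $\mu_{t+s}=P^t\mu_s$ for a single kernel $P$; in the random-environment setting it is the crux. The paper resolves it by (i) disintegrating the initial law over the environment, $\mu_0(A)=\int_{\mathfrak{Y}}\tilde\mu_0(\mathbf{y},A)\,\zeta(d\mathbf{y})$ as in \eqref{senegal}, and (ii) the backward construction \eqref{dafie},
\begin{equation*}
\mu_n(\mathbf{y})=Q(y_0)Q(y_{-1})\cdots Q(y_{-n+1})\,\tilde\mu_0(\hat{\mathbf{y}}_{-n+1}),
\end{equation*}
so that the consecutive measures $\mu_n(\mathbf{y})$ and $\mu_{n+1}(\mathbf{y})$ are the \emph{same} $n$ kernels applied to the two initial laws $\tilde\mu_0(\hat{\mathbf{y}}_{-n+1})$ and $Q(y_{-n})\tilde\mu_0(\hat{\mathbf{y}}_{-n})$, while stationarity of $Y$ guarantees $\mu_n=\mathcal{E}[\mu_n(\mathbf{Y})]$. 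With this realignment, the $n$-fold contraction and your good/bad split give $\rho_0(\mu_n,\mu_{n+1})\le C\bigl[e^{-n\alpha(g(n))\lambda(g(n))/2}+\ell(n)\bigr]$, and summing yields the Cauchy property and \eqref{ob}. Without this shift-by-stationarity (or an equivalent device), your plan stalls exactly at the step where $\mu_*$ is produced.
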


Clearly, Assumption \ref{init} implies Assumption \ref{manuela} and \eqref{psota1} implies \eqref{psota2}.
Next, ergodic theorems corresponding to Theorems \ref{limit} and \ref{limit2} are stated.

\begin{theorem}\label{lln} 
Let Assumptions \ref{lyapunov}, \ref{small}, \ref{init} and \ref{stary}  
be in force, but with $R(n):=8K(n)/\lambda(n)$, $n\in\mathbb{N}$ 
in Assumption \ref{small}. Let $Y$ be an ergodic process.
Let $\phi\in \Phi(V^{\delta})$ 
for some $0< \delta\leq 1/2$. Assume 
$$
r_1(0)+r_2(0)<\infty{}
$$
and
\begin{equation}\label{gabba}
\left(\frac{K(g(t))}{\lambda(g(t))}\right)^{2\delta}\frac{\pi(t)}{t}\to 0,\ t\to\infty.
\end{equation}
Then 
\begin{equation}\label{matee1}
\frac{\phi(X_1)+\ldots+\phi(X_t)}{t}\to\int_{\mathcal{X}} \phi(z)\mu_*(dz),\ t\to\infty
\end{equation}
holds in $L^{p}$ for each $p<1/\delta$. (Here $\mu_*$ is the same as in Theorem \ref{limit} above.) 
\end{theorem}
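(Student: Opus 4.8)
The plan is to condition on the entire environment and split the time average into a part that fluctuates because of the intrinsic randomness of $X$ and a part that fluctuates because of the randomness of $Y$. Writing $\mathcal{G}:=\sigma(Y_j,\ j\in\mathbb{Z})$, $\mathbf{Y}:=(Y_j)_{j\in\mathbb{Z}}$ and $\bar\phi:=\int_{\mathcal X}\phi\,d\mu_*$, I would start from
\[
\frac{1}{t}\sum_{i=1}^{t}\phi(X_i)-\bar\phi
=\frac{1}{t}\sum_{i=1}^{t}\bigl(\phi(X_i)-E[\phi(X_i)\mid\mathcal G]\bigr)
+\left(\frac{1}{t}\sum_{i=1}^{t}E[\phi(X_i)\mid\mathcal G]-\bar\phi\right)
\]
and show each summand tends to $0$ in $L^{p}$ for every $p<1/\delta$. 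The restriction $p<1/\delta$, i.e.\ $\delta p<1$, is precisely what makes the moments tractable: since $\phi\in\Phi(V^{\delta})$ one has $|\phi(X_i)|\leq C(1+V^{\delta}(X_i))$, and reusing the drift iteration behind Theorem \ref{limit} one records $\sup_i E[V(X_i)]<\infty$, whence by Jensen (as $\delta p<1$) $\sup_i E[V^{\delta p}(X_i)]<\infty$ and $\sup_i\|\phi(X_i)\|_{p}<\infty$.

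For the environment-averaged term I would invoke the ergodicity of $Y$. Given $\mathcal G$, the process $X$ is an inhomogeneous Markov chain, and the conditional coupling estimate underlying Theorem \ref{limit} shows that the conditional laws $\mathrm{Law}(X_i\mid\mathcal G)$ stabilise as $i\to\infty$ to a stationary random measure which, by stationarity of $Y$, can be written as $\mu_*^{\circ}\circ\theta^{i}$ for a fixed $\mathcal G$-measurable kernel $\mu_*^{\circ}$ and the shift $\theta$. Thus $E[\phi(X_i)\mid\mathcal G]$ may be replaced by $h(\theta^{i}\mathbf{Y})$, $h(\mathbf{Y}):=\int_{\mathcal X}\phi\,d\mu_*^{\circ}$, at a Cesàro-negligible cost, the coupling error being summable thanks to $r_1(0)+r_2(0)<\infty$. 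Since $Y$ is stationary and ergodic and $h\in L^{p}$ (again by Jensen with $\delta p<1$), Birkhoff's theorem in its $L^{p}$ form gives $\frac1t\sum_{i=1}^{t}h(\theta^{i}\mathbf{Y})\to E[h]$ in $L^{p}$; and $E[h]=\int_{\mathcal X}\phi\,d\,E[\mu_*^{\circ}]=\bar\phi$, because $\mu_*=\lim_i\mathrm{Law}(X_i)=\lim_iE[\mathrm{Law}(X_i\mid\mathcal G)]=E[\mu_*^{\circ}]$ by Theorem \ref{limit}. It is exactly here that ergodicity of $Y$ is indispensable: without it the $\mathcal G$-measurable quantities $E[\phi(X_i)\mid\mathcal G]$ need not decorrelate, so their average would only converge to a genuinely random limit.

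The fluctuation term is the crux, and is where Assumption \ref{stary} and condition \eqref{gabba} enter. I would estimate it conditionally on $\mathcal G$ through the conditional second moment
\[
E\!\left[\left(\frac1t\sum_{i=1}^t\bigl(\phi(X_i)-E[\phi(X_i)\mid\mathcal G]\bigr)\right)^{\!2}\ \middle|\ \mathcal G\right]
=\frac{1}{t^{2}}\sum_{i,j=1}^{t}\mathrm{Cov}\bigl(\phi(X_i),\phi(X_j)\mid\mathcal G\bigr).
\]
On the ``tame'' event $\{\max_{1\le i\le t}\Vert Y_i\Vert\le g(t)\}$, which by \eqref{mata} has probability at least $1-\ell(t)$, the minorization of Assumption \ref{small} (with the enlarged small set $R(n)=8K(n)/\lambda(n)$, needed to couple two copies of the chain simultaneously) supplies a one-step coupling probability $\geq\alpha(g(t))$ and a drift rate $\geq\lambda(g(t))$. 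This makes the centred process conditionally $L$-mixing (Section \ref{lm}) with $M$-bound of order $(K(g(t))/\lambda(g(t)))^{\delta}$, since $V$ equilibrates near $K(g(t))/\lambda(g(t))$ in this regime, and with mixing sum of order $\pi(t)$, the logarithmic correction in $\pi(t)$ accounting for the summed geometric decay at rate $\alpha(g(t))\lambda(g(t))$. The $L$-mixing moment inequality then yields a conditional variance of order $(K(g(t))/\lambda(g(t)))^{2\delta}\,\pi(t)/t$, which tends to $0$ precisely by \eqref{gabba}. The complementary extreme-environment event has probability $\leq\ell(t)\to0$ (finiteness of $r_2(0)$ already forces $\ell(t)\to0$), so by Hölder's inequality, together with the uniform $L^{p'}$ moment bound for some $p'\in(p,1/\delta)$, its contribution vanishes as well.

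The main obstacle, as always in this circle of ideas, is the fluctuation estimate: the decorrelation rate of $X$ is random and degenerates whenever the environment becomes extreme, so no uniform $L$-mixing bound is available on the whole time axis. The device that resolves this is exactly the maximal-process control of Assumption \ref{stary}, which confines $\Vert Y\Vert$ below $g(t)$ off an event of probability $\ell(t)$ and thereby converts the random mixing rate into the deterministic, horizon-dependent rate $\alpha(g(t))\lambda(g(t))$ that drives \eqref{gabba}. Assembling the $L^{p}$ Birkhoff average for the environment-averaged term, the vanishing conditional variance for the fluctuation term, and the summable coupling errors furnished by $r_1(0)+r_2(0)<\infty$, and then interpolating between the $L^{p'}$ moment bound and the $L^{2}$ conditional-variance estimate, would deliver \eqref{matee1} in $L^{p}$ for every $p<1/\delta$, as claimed; the techniques parallel those of \cite{bm,bw,gmmtv,msg}.
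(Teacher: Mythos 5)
Your proposal follows essentially the same route as the paper's proof: the same decomposition into a quenched fluctuation term, a stabilization term for the conditional laws (controlled by the estimates behind Theorem \ref{limit} and $r_1(0)+r_2(0)<\infty$), and a Birkhoff average over the ergodic environment, with the fluctuation term handled by the coupling/$L$-mixing machinery of Sections \ref{lm}--\ref{prof1} under the truncation supplied by Assumption \ref{stary}, producing exactly the bound of order $(K(g(t))/\lambda(g(t)))^{2\delta}\pi(t)/t$ that hypothesis \eqref{gabba} makes vanish. The differences are purely in execution — you condition on $\sigma(\mathbf{Y})$ where the paper realizes the quenched chain via the random-mapping representation of Lemma \ref{t}, and you use a conditional $L^{2}$ variance estimate plus interpolation against uniform $L^{p'}$ moments where the paper invokes the $L^{p}$ moment inequality for $L$-mixing processes (Lemma \ref{inek}) directly — and neither changes the substance of the argument.
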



We can weaken our assumptions for bounded $\phi$.

\begin{theorem}\label{llnn} 
Let Assumptions \ref{lyapunov}, \ref{small}, \ref{stary} and \ref{manuela} be in force, but with  
$R(n):=8K(n)/\lambda(n)$, $n\in\mathbb{N}$ in Assumption \ref{small}. Let $Y$ be an ergodic process. Assume 
$$
r_3(0)+r_4(0)<\infty
$$
and
\begin{equation}\label{gadde}
\frac{\pi(t)}{t}\to 0,\ t\to\infty.
\end{equation} 
Then for each $\phi\in\Phi(1)$ the convergence \eqref{matee1} holds in $L^{p}$ for all $p\geq 1$.
\end{theorem}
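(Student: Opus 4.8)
The plan is to follow the scheme of the proof of Theorem \ref{lln} in the degenerate regime $\delta=0$, exploiting that $\phi\in\Phi(1)$ is bounded, say $|\phi(x)|\le M$ for all $x$, so that all $L^p$-statements collapse to a single $L^2$-estimate. Indeed, writing $\bar\phi:=\int_{\mathcal X}\phi\,d\mu_*$ and $Z_t:=\frac1t\sum_{k=1}^t(\phi(X_k)-\bar\phi)$, we have $|Z_t|\le 2M$; hence once $Z_t\to 0$ in $L^2$ is shown, it follows in every $L^p$ ($p\ge1$): for $p\ge2$ from $\|Z_t\|_p\le(2M)^{1-2/p}\|Z_t\|_2^{2/p}$, and for $1\le p<2$ from monotonicity of the norms on a probability space. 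So I would reduce the whole statement to proving $Z_t\to0$ in $L^2$. Existence of $\mu_*$ and the total-variation rate $\rho_0(\mu_t,\mu_*)\le C[r_3(t)+r_4(t)]$ are supplied by Theorem \ref{limit2}, which applies since \eqref{psota2} and Assumption \ref{manuela} hold.

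Let $\mathcal G:=\sigma(Y_i,\ i\in\mathbb Z)$ be generated by the whole environment, and set $a_k:=E[\phi(X_k)\mid\mathcal G]$ and $\eta_k:=\phi(X_k)-a_k$. By \eqref{recu}, conditionally on $\mathcal G$ the process $X$ is an inhomogeneous Markov chain with transitions $Q(Y_k,\cdot,\cdot)$, the variables $\eta_k$ are $\mathcal G$-centred, and I would split
\[
\frac1t\sum_{k=1}^t(\phi(X_k)-\bar\phi)=\frac1t\sum_{k=1}^t\eta_k+\frac1t\sum_{k=1}^t(a_k-\bar\phi),
\]
treating the two sums by entirely different mechanisms.

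For the \emph{fluctuation} term I would establish a quenched variance bound. For $j<k$ the quantity $E[\eta_j\eta_k\mid\mathcal G]$ is controlled by the oscillation of $x\mapsto[Q(Y_{k-1})\cdots Q(Y_j)\phi](x)$, which is governed by the total-variation contraction of the quenched chain furnished by the Hairer--Mattingly coupling underlying Theorem \ref{limit2} (here the enlarged small set $R(n)=8K(n)/\lambda(n)$ is what the decorrelation argument requires, exactly as in Theorem \ref{lln}). On the event $\{\max_{j<i\le k}\|Y_i\|<g(k-j)\}$, whose complement has probability at most $\ell(k-j)$ by stationarity and Assumption \ref{stary}, the chain contracts at rate $\sim\alpha(g(k-j))\lambda(g(k-j))$, so the good part of $\sum_{j<k}|E[\eta_j\eta_k\mid\mathcal G]|$ is summable through the relaxation time $\pi$, while the bad part is absorbed into the tails $r_3,r_4$. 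Summing and dividing by $t^2$ I expect a bound $E[(\frac1t\sum_{k=1}^t\eta_k)^2]\le C\big(\pi(t)/t+r_3(\lfloor t/2\rfloor)+r_4(\lfloor t/2\rfloor)\big)\to0$, which vanishes precisely by \eqref{gadde} and \eqref{psota2}. This is the \textbf{main obstacle}: it forces one to track simultaneously the time-growth of the worsening rates $\lambda(g(\cdot)),\alpha(g(\cdot))$ and the good/bad environment decomposition, and it is exactly where the hypothesis $\pi(t)/t\to0$ is consumed.

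For the \emph{environment} term I would invoke ergodicity of $Y$. The same forgetting property shows that $\mathrm{Law}(X_k\mid\mathcal G)$ converges, as $k\to\infty$, to a stationary random probability $\mu^\omega_\infty$ measurable with respect to the past of the environment, with $E[\mu^\omega_\infty]=\mu_*$. Writing $\Psi(\omega):=\int_{\mathcal X}\phi\,d\mu^\omega_\infty$ and letting $\theta$ denote the shift under which $Y$ is stationary and ergodic, forgetting gives $E|a_k-\Psi(\theta^k\omega)|\to0$. Since $\Psi$ is bounded and $(\Psi(\theta^k\omega))_k$ is stationary and ergodic, Birkhoff's theorem yields $\frac1t\sum_{k=1}^t\Psi(\theta^k\omega)\to E[\Psi]=\bar\phi$ almost surely and in $L^1$. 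Combined with $E|a_k-\Psi(\theta^k\omega)|\to0$ and a Cesàro averaging this gives $\frac1t\sum_{k=1}^t(a_k-\bar\phi)\to0$ in $L^1$, hence in $L^2$ by boundedness. Adding the two contributions proves $Z_t\to0$ in $L^2$, and therefore \eqref{matee1} in every $L^p$, $p\ge1$.
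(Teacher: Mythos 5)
Your overall architecture coincides with the paper's: conditioning on the whole environment $\mathcal{G}=\sigma(Y_i,\ i\in\mathbb{Z})$, your splitting into a fluctuation term $\frac1t\sum_k\eta_k$, a forgetting term $\frac1t\sum_k(a_k-\Psi(\theta^k\omega))$ and a Birkhoff term $\frac1t\sum_k(\Psi(\theta^k\omega)-\bar\phi)$ is exactly the paper's three-term decomposition \eqref{matee}; your treatment of the last two terms (forgetting via the summable total-variation increments behind Theorem \ref{limit2}, Birkhoff's theorem for the ergodic $Y$, and $E[\mu^\omega_\infty]=\mu_*$ as in Remark \ref{wekings}) is correct, as is the reduction of all $L^p$-statements to a single $L^2$-bound using boundedness of $\phi$. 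Where you genuinely depart from the paper is the fluctuation term: the paper builds the random mappings $T_t$ (Lemma \ref{t}), couples the quenched chain with a copy restarted from a fixed point, proves $L$-mixing (Lemma \ref{mommo}, with H\"older exponents $(1,\infty)$ in the bounded case) and invokes the moment inequality of Lemma \ref{inek}, whereas you propose a direct conditional covariance bound. For bounded $\phi$, where only an $L^2$-estimate is needed, this is in principle a legitimate and more elementary route.

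However, the key estimate as you specify it has a genuine gap. You split good/bad environments \emph{pairwise}, on events $\{\max_{j<i\le k}\Vert Y_i\Vert<g(k-j)\}$. Any contraction or coupling bound for $|E[\eta_j\eta_k\mid\mathcal{G}]|$ carries a prefactor involving $V(X_j)$: in the Hairer--Mattingly contraction one has $\rho_\beta(\delta_x,\delta_{x'})=2+\beta(V(x)+V(x'))$, and in the coupling version the return-time estimates are driven by the drift function at the starting point. After taking expectations you therefore need control of quantities like $E[V(X_j)1_{\mathrm{good}}]$. Your pairwise good event constrains only $Y_i$ for $j<i\le k$ and says nothing about the environment before time $j$; and under the hypotheses of Theorem \ref{llnn}, $E[V(X_j)]$ can be $+\infty$ for every $j\ge 2$: Assumption \ref{manuela} gives moments of $V(X_0),V(X_1)$ only, no integrability of $K(\Vert Y_0\Vert)$ is assumed, and none of $r_3$, $r_4$, $\pi$ involves $K$ at all (Lemma \ref{evx}, which rescues Theorem \ref{lln}, is unavailable here since it needs Assumption \ref{init} and $r_1(0)+r_2(0)<\infty$). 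This is precisely why the paper truncates the environment \emph{globally}, replacing $\mathbf{Y}$ by $\tilde{\mathbf{Y}}$ bounded by $g(N)$ over the whole window: the truncated quenched chain has uniformly bounded $V$-moments (Lemma \ref{bobo}), and for bounded $\phi$ the truncation error is paid once, $S\,P((\tilde Y_1,\ldots,\tilde Y_N)\neq(Y_1,\ldots,Y_N))\le S\ell(N)$. A second, smaller point: the rate $\pi(t)/t$ does not come from naively summing the contraction bound over lags --- that sum produces a term of order $1/\bigl(t\,\lambda^2(g(t))\bigr)$, which need not vanish under \eqref{gadde} (take $\alpha$ constant and $\lambda(g(t))\sim t^{-1/2}$); one must cap the contraction bound at the trivial bound $4\Vert\phi\Vert_\infty^2$ before summing, which is what generates the logarithmic factor in $\pi$. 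Both repairs are available, but they constitute the actual content of the paper's Lemmas \ref{bobo}--\ref{mommo}, so the proposal as written does not yet amount to a proof.
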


\begin{remark}\label{nemes}{\rm In Theorems \ref{lln} and \ref{llnn} above, we require a slight strenghtening
of Assumption \ref{small} by imposing \eqref{mey} with $R(n)=8K(n)/\lambda(n)$ instead
of $R(n)=4K(n)/\lambda(n)$. 

Condition \eqref{gadde} is closely related to the condition
$r_3(0)<\infty$ but none of the two implies the other. Indeed, fix $g(t):=t$. 
Choose $\lambda$ constant and $\alpha(t):=\sqrt{\ln(t)}/{t}$, $t\geq 4$. Then $\pi(t)/t\to 0$
but $r_3(0)=\infty$. Conversely, let $\alpha:=1/3$ and $\lambda(t)=\frac{12\ln(t)}{t}$.
Then $r_3(0)<\infty$ but $\pi(t)/t$ tends to a positive constant.}
\end{remark}

\begin{example}\label{mbaker} {\rm Let $Y$ be strongly stationary 
$\mathbb{R}^d$-valued with
$E|Y_0|^p<\infty$, $p\geq 1$. Let Assumptions \ref{lyapunov} and \ref{small}
hold with $K(\cdot)$ having at most polynomial growth
(i.e. $K(n)\leq c n^{b}$ with some $c,b>0$) and $\alpha(\cdot)$, $\lambda(\cdot)$ having 
at most polynomial decay
(i.e. $\alpha(n)\geq c n^{-b}$ with some $c,b>0$, similarly for $\lambda$). 
Let Assumption \ref{init} hold. Then Remark \ref{l+} shows (choosing $\chi$ small and $r$ large)
that Theorems \ref{limit} and \ref{lln} apply.}
\end{example}

\section{Difference equations 
in Gaussian environments}\label{dif}

In this section we present examples of processes $X$ that satisfy a
difference equation, modulated by the process $Y$.  
We do not aim at a high degree of generality but prefer to illustrate the power
of the results in Section \ref{mar} in some easily tractable cases. 
We stress that, as far as we know, none of these results follow from the
existing literature.

We fix $\mathcal{Y}=\mathbb{R}^d$ for some $d$ and
$\mathcal{X}=\mathbb{R}$. We also fix a $\mathcal{Y}$-valued zero-mean Gaussian stationary process $Y_t$, $t\in\mathbb{Z}$.
We set $\Vert y\Vert=\lceil|y|\rceil$, $y\in\mathcal{Y}$ as in Example \ref{ppp} above. We will exclusively use $V(x)=|x|$, 
$x\in\mathbb{R}$ in the examples below.

\begin{remark}\label{gagga}
{\rm Let $\xi_t$, $t\in\mathbb{Z}$ be a zero-mean $\mathbb{R}$-valued
stationary Gaussian process with unit variance.
It is well-known that in this case \begin{equation}\label{port}
E\zeta_t\leq \sqrt{2\ln(t)}\leq \sqrt{2t},\ t\geq 1
\end{equation}
holds for $\zeta_t:=\max_{1\leq i\leq t}\xi_i$. Furthermore, for all $a>0$,
\begin{equation}\label{tv}
P(\zeta_t-E\zeta_t\geq a)\leq e^{-a^2/2},
\end{equation}
see \cite{tsirelson,vitale}. Applying \eqref{tv} with $a=\sqrt{2t}$ and
then proceeding analogously with the process $-\xi$, it follows from \eqref{port} that
\begin{equation*}
P\left(\max_{1\leq i\leq t}|\xi_i|\geq 2\sqrt{2t}\right)\leq 2e^{-t}.
\end{equation*}
Applying these observations to every coordinate of $Y$, it follows that Assumption \ref{stary} holds for the process $Y$ with the choice 
$g(k)=\lceil c_1 \sqrt{k}\rceil$,
$\ell(k)=\exp(-c_2 k)$ for some $c_1,c_2>0$ and thus $r_4(n)$ decreases at a geometric rate as $n\to\infty$.

More generally, choosing $a=t^{b}$ with some $b>0$, Assumption \ref{stary} holds for $Y$  
with the choice $g(k)=\lceil c_1 k^{b} \rceil$,
$\ell(k)=\exp(-c_2 k^{2b})$, by updating \eqref{port} and \eqref{tv}. }
\end{remark}

We assume throughout this section that 
$\varepsilon_t$, $t\in\mathbb{N}$ is an $\mathbb{R}$-valued i.i.d. sequence, independent of $Y_t$, $t\in\mathbb{Z}$;
$E|\varepsilon_0|^2<\infty$ and the law of $\varepsilon_0$
has an everywhere positive density $f$ with respect to the Lebesgue measure, which is even and non-increasing on 
$[0,\infty)$.
All these hypotheses could clearly be weakened/modified, we just try to stay as simple as possible.

\begin{example}\label{hummel}{\rm First we investigate the effect of the ``contraction coefficient'' $\lambda$
in \eqref{lyapi}.
Let $d:=1$. Let $0<\underline{\sigma}\leq \overline{\sigma}$ be constants and 
$\sigma:\mathbb{R}\times\mathbb{R}\to [\underline{\sigma},\overline{\sigma}]$
a measurable function. Let furthermore $\Delta:\mathbb{R}\to (0,1]$ be even and non-increasing on $[0,\infty)$,
for which we will develop conditions on the way.
We stipulate that the tail of $f$ is not too thin: it is at least as thick as that of a Gaussian variable, that is,
\begin{equation}\label{gsu}
f(x)\geq e^{-sx^2},\ x\geq 0,
\end{equation}
for some $s>0$. 

We assume that the dynamics of $X$ is given by
\begin{equation*}
X_0:=0,\ X_{t+1}:=(1-\Delta(Y_t))X_t+\sigma(Y_t,X_t)\varepsilon_{t+1},\ t\in\mathbb{N}.
\end{equation*}
We will find $K(\cdot),\lambda(\cdot),\alpha(\cdot)$ such that Assumptions \ref{lyapunov} and 
\ref{small} hold and give an estimate for the rate $r_3(n)$ appearing in \eqref{ob}. (Note that
we already have estimates for the rate $r_4(n)$ from Remark \ref{gagga}.)

The density of $X_1$ conditional to $X_0=x$, $Y_0=y$ (w.r.t. the Lebesgue measure) is easily seen to be
\[
h_{x,y}(z):=f\left(\frac{z-(1-\Delta(y))x}{\sigma(y,x)}\right)\frac{1}{\sigma(y,x)},\ z\in\mathbb{R}.
\]
Fixing $\eta>0$, we can estimate
\begin{eqnarray*}
\inf_{x,z\in [-\eta,\eta]}h_{x,y}(z)\geq 
f\left(\frac{2\eta}{\underline{\sigma}}\right)\frac{1}{\overline{\sigma}}=:m(\eta),
\end{eqnarray*}
and $m(\cdot)$ does not depend on $y$.
Define the probability measures $$
\tilde{\nu}_{\eta}(A):=\frac{1}{2\eta}\mathrm{Leb}(A\cap [-\eta,\eta]),\ A\in\mathfrak{B}.
$$
It follows that 
\[
Q(y,x,A)\geq 2\eta m(\eta) \tilde{\nu}_{\eta}(A),\ A\in\mathfrak{B},
\]
for all $x\in [-\eta,\eta]$, $y\in\mathbb{R}$.
Notice that
\[
[Q(y)V](x)\leq (1-\Delta(y))V(x) + \overline{\sigma}E|\varepsilon_0|\leq (1-\Delta(y))V(x)+K,
\]
where $K:=\max\{\overline{\sigma}E|\varepsilon_0|,1\}$. 
Then Assumption \ref{lyapunov} holds with $A_n=\{x\in\mathbb{R}:\, |x|\leq n\}$, $\lambda(n)=\Delta(n)$ and $K(n)=K$, $n\geq 1$.
(Here and in the sequel we use the index set $\mathbb{N}\setminus\{0\}$ instead of $\mathbb{N}$ for convenience.)

Let $\eta:=\tilde{R}(y):=4K/\Delta(y)$, $y\in\mathcal{Y}$ and $R(n)=\tilde{R}(n)$,
$n\in\mathbb{N}$. We note that $\tilde{R}(y)$ is 
defined for every $y\in\mathcal{Y}$ while $R(n)$ is defined for every $n\in\mathbb{N}$, this is
why we keep different notations for these two functions here and also in the subsequent examples.
We can conclude, using the tail bound \eqref{gsu} that
\[
Q(y,x,A)\geq \frac{8Km(\tilde{R}(y))}{\Delta(y)}\nu_{\tilde{R}(y)}(A)\geq 
 \frac{e^{-c_3 \tilde{R}^2(y)}}{\Delta(y)}\nu_{\tilde{R}(y)}(A), 
\]
for all $A\in\mathfrak{B}$, with some $c_3>0$ so \eqref{mey} in Assumption \ref{small} holds with
\[
\alpha(n):=e^{-c_3 {R}^2(n)}/\Delta(0),\ n\geq 1,
\]
and $\nu_n=\tilde{\nu}_{R(n)}$.
Now let the function $\Delta$ be such that $\Delta(y):= 1$ for $0\leq y<3$ and
$\Delta(y)\geq 1/(\ln(y))^{\delta}$ with some $\delta>0$, for all $y\geq 3$.
We obtain from the previous estimates and from Remark \ref{gagga} with $g(k)=\lceil c_1 \sqrt{k}\rceil$ that
\[
\lambda(g(k))\alpha(g(k))\geq e^{-c_4\ln^{2\delta}(k)},
\]
with some $c_4>0$.
When $\delta<1/2$, this leads to estimates on the terms of $r_3(n)$ which guarantee $r_3(0)<\infty$. 

If instead of \eqref{gsu} we assume 
\[
f(x)\geq e^{-sx},\ x\geq 0,
\] 
then $r_3(0)<\infty$ follows whenever $\delta<1$. This shows nicely the interplay between
the feasible fatness of the tail of $f$ and the strength of the mean-reversion $\Delta(\cdot)$.} 
\end{example}

\begin{example}
{\rm Again, let $d:=1$, $X_0:=0$ and
\begin{equation*}
X_{t+1}:=(1-\Delta) X_t +\sigma(Y_t,X_t)\varepsilon_{t+1},\ t\in\mathbb{N},
\end{equation*}
where $\sigma:\mathbb{R}\times\mathbb{R}\to (0,\infty)$ is a measurable function and $0\leq \Delta<1$ is a constant.
We furthermore assume that 
\begin{equation*}
c_5 G(y)\leq \sigma(y,x)\leq c_6 G(y),\ x\in\mathbb{R},
\end{equation*}
with some even function $G:\mathbb{R}\to (0,\infty)$ that is nondecreasing on $[0,\infty)$ and with constants $c_5,c_6>0$.
We clearly have \eqref{lyapi} with $\lambda(n)=\Delta$, $n\in\mathbb{N}$
(i.e.\ $\lambda(\cdot)$ is constant)
and $A_n=\{x\in\mathbb{R}:\ |x|\leq n\}$, $K(n):=\tilde{K}(n)$, $n\in\mathbb{N}$ where 
$\tilde{K}(y)= c_6 G(y)E\vert\varepsilon_0\vert$, $y\in\mathbb{R}$. 
Taking $\tilde{R}(y)= 4\tilde{K}(y)/\Delta$, $y\in\mathbb{R}$, estimates as in Example \ref{hummel}
lead to
\[
Q(y,x,A)\geq  2\tilde{R}(y)f\left(\frac{2\tilde{R}(y)}{c_5 G(y)}\right)\frac{1}{c_6 G(y)}\tilde{\nu}_{\tilde{R}(y)}(A)\geq c_7 
\tilde{\nu}_{\tilde{R}(y)}(A), 
\]
for all $A\in\mathfrak{B}$ with some fixed constant $c_7>0$, where $\tilde{\nu}_{\tilde{R}(y)}(\cdot)$ is the normalized
Lebesgue measure restricted to $C(\tilde{R}(y))$, as in Example \ref{hummel} above, so setting $R(n)=\tilde{R}(n)$,
$n\in\mathbb{N}$, we can choose $\nu_n=\tilde{\nu}_{R(n)}$ and $\alpha(\cdot)$ a positive constant. 

Assume e.g., $G(y)\leq C[1+|y|^q]$, $y\geq 0$ with some $C,q>0$ and choose $g(k)=\lceil c_1\sqrt{k}\rceil$, $\ell(k)=\exp(-c_2 k)$, 
as discussed in Remark \ref{gagga}. Then Theorems \ref{limit} and \ref{lln} apply.}
\end{example}

\begin{example} {\rm We now investigate a discrete-time model for financial time series,
inspired by the ``fractional
stochastic volatility model'' of \cite{cr,gjr}. 

Let $w_t$, $t\in\mathbb{Z}$ and $\varepsilon_t$, $t\in\mathbb{N}$
be two sequences of i.i.d. random variables such that the two sequences
are also independent. Assume that $w_t$ are Gaussian. We define the (causal) infinite moving average process
$$
\xi_t:=\sum_{j=0}^{\infty} a_jw_{t-j},\ t\in\mathbb{Z}.
$$
This series is almost surely convergent whenever $\sum_{j=0}^{\infty} a_j^2<\infty$.
We take $d=2$ here and the random environment will be the $\mathcal{Y}=\mathbb{R}^2$-valued process $Y_t=(w_t,\xi_t)$,
$t\in\mathbb{Z}$. 

We imagine that $\xi_t$ describes the \emph{log-volatility} of an asset in a financial market. 
It is reasonable to assume that $\xi$ is a Gaussian linear process (see \cite{gjr}
where the related continuous-time models are discussed in detail).

Let us now consider the $\mathbb{R}$-valued process $X$ which will describe
the \emph{increment of the log-price} of the given asset. Assume that $X_0:=0$,
\[
X_{t+1}=(1-\Delta) X_t+\rho e^{\xi_t}w_t +\sqrt{1-\rho^2}e^{\xi_t}\varepsilon_{t+1},\ 
t\in\mathbb{N},
\]
with some $-1<\rho<1$, $0<\Delta\leq 1$. The logprice is thus jointly driven by 
the noise sequences $\varepsilon_t$, $w_t$. The parameter
$\Delta$ is responsible for the autocorrelation of $X$ ($\Delta$ is typically close to $1$). The parameter $\rho$
controls the correlation of the price and its volatility. This is found to be non-zero (actually, negative)
in empirical studies, see \cite{cont}, hence it is important to include $w_t$, $t\in\mathbb{Z}$ both in the dynamics of $X$ and
in that of $Y$. We take $A_n=\{y=(w,\xi)\in\mathbb{R}^2:\ |y|\leq n\}$, $n\in\mathbb{N}$.

Notice that
$$
|X_{1}|\leq (1-\Delta) |X_0| + [|w_0|+|\varepsilon_{1}|]e^{\xi_t}
$$
hence
$$
E[V(X_{1})\vert X_0=x,\ Y_0=(w,\xi)] \leq (1-\Delta) V(x)+c_{8}e^{\xi}(1+|w|)
$$
for {all} $x\in\mathbb{R}$, with some $c_{8}>0$,
i.e.\ Assumption \ref{lyapunov} holds with $\lambda(n)=\lambda:=\Delta$ and 
$K(n)=c_8 e^n(1+n)$.

We now turn our attention to Assumption \ref{small}. Denote the density of the law of
$X_1$ conditional to $X_0=x$, $Y_0=(w,\xi)$ with respect to the Lebesgue measure by 
$h_{x,w,\xi}(z)$, $z\in\mathbb{R}$.
For $x,z\in [-\eta,\eta]$ we clearly have 
\begin{equation}\label{matroz}
h_{x,w,\xi}(z)\geq f\left(\frac{2\eta+e^{\xi}|w|}{e^{\xi}\sqrt{1-\rho^2}}\right)\frac{1}{e^{\xi}\sqrt{1-\rho^2}}.
\end{equation}

We assume from now on that $f$, the density of $\varepsilon_0$ satisfies 
\begin{equation*}
f(x)\geq s/(1+x)^{\chi},\ x\geq 0
\end{equation*} 
with some $s>0$, $\chi>3$, this is reasonable as $X_t$ has fat tails according to empirical studies, 
see \cite{cont}. At the same time, Assumption \ref{init} can also be satisfied for such a choice
of $f$. 

Define $\tilde{K}(y):=e^{\xi}(1+|w|)$ and $\tilde{R}(y):=4\tilde{K}(y)/\lambda$, for $y=(w,\xi)\in\mathbb{R}^2$. 
Use \eqref{matroz} to obtain, as in Example \ref{hummel} above,
$$
Q(y,x,A)\geq \frac{c_{9}}{(1+|w|)^{\chi}}\frac{1}{e^{\xi}}2\tilde{R}(y)\tilde{\nu}_{\tilde{R}(y)}(A)\geq \frac{c_{10}}{(1+|w|)^{\chi-1}}
\tilde{\nu}_{\tilde{R}(y)}(A), 
$$
with fixed constants $c_{9},c_{10}>0$,
where $\tilde{\nu}_\eta$ is the normalized Lebesgue measure restricted to $[-\eta,\eta]$.
Set $R(n)=\tilde{R}((n,n))$, $n\geq 1$. Then Assumption \ref{small} holds with
$$
\alpha(n)=\frac{c_{10}}{(1+n)^{\chi-1}},\ n\geq 1
$$
and $\nu_{n}=\tilde{\nu}_{R(n)}$.
Recalling the end of Remark \ref{gagga}, and choosing $b>0$ small enough we can conclude
that Theorems \ref{limit} and \ref{lln} apply to this stochastic volatility model.}
\end{example}

We stress that only a small fraction of relevant examples has been presented above, favouring
simplicity. The results of Section \ref{mar} clearly apply in much greater generality.

\section{Proofs of stochastic stability}\label{prof}

Denote by $\zeta$ the law of $\mathbf{Y}:=(Y_{t})_{t\in\mathbb{Z}}\in \mathfrak{Y}:=\mathcal{Y}^{\mathbb{Z}}$ on the Borel sets 
$\mathfrak{T}$ of
$\mathfrak{Y}$. By the measure decomposition theorem there is a probabilistic kernel $\tilde{\mu}_{0}:\mathfrak{Y}\times\mathfrak{B}\to{}
[0,1]$ such that 
\begin{equation}\label{senegal}
\mu_{0}(A)=\int_{\mathfrak{Y}}\tilde{\mu}_{0}(\mathbf{y},A)\zeta(d\mathbf{y}),\ A\in\mathfrak{B}.
\end{equation}
For each $\mathbf{y}\in\mathfrak{Y}$, we will denote by $\tilde{\mu}_{0}(\mathbf{y})$ the 
probability $A\to \tilde{\mu}_{0}(\mathbf{y},A)$, $A\in\mathfrak{B}$
in the sequel.

Clearly, Assumption \ref{init} is equivalent to 
\begin{equation}\label{johnson}
E\left[\int_{\mathcal{X}} V^{2}(z)[\tilde{\mu}_0(\mathbf{Y})+[Q(Y_0)\tilde{\mu}_0(\mathbf{Y})]](dz)\right]<\infty,
\end{equation}
and Assumption \ref{manuela} is equivalent to
\begin{equation}\label{johnson1}
E\left[\int_{\mathcal{X}} V(z)[\tilde{\mu}_0(\mathbf{Y})+[Q(Y_0)\tilde{\mu}_0(\mathbf{Y})]](dz)\right]<\infty.
\end{equation}

We first present a result of \cite{hm} 
(see also the related ideas in \cite{hms}) which will be used below.

\begin{lemma}\label{alap}
Let $L:\mathcal{X}\times\mathfrak{B}\to [0,1]$ be a probabilistic kernel such that
\[
LV(x)\leq \gamma V(x)+K,\ x\in\mathcal{X},
\]
for some $0\leq \gamma<1$, $K>0$. Let
$C:=\{x\in\mathcal{X}:\, V(x)\leq R \}$ for some $R>2K/(1-\gamma)$. 
Let us assume that there is a probability $\nu$ on $\mathfrak{B}$
such that
\[
\inf_{x\in C} L(x,A)\geq \alpha \nu(A),\ A\in\mathfrak{B},
\]
for some $\alpha>0$. Then for each $\alpha_0\in (0,\alpha)$ and for 
$\gamma_0:=\gamma + 2K/R$, 
\[
\rho_{\beta}(L\mu_1,L\mu_2)\leq \max\left\{1-(\alpha-\alpha_0),\frac{2+R\beta\gamma_0}{2+R\beta}\right\}
\rho_{\beta}(\mu_1,\mu_2),\ \mu_1,\mu_2\in\mathcal{P}_V,
\]
holds for $\beta=\alpha_0/K$.\hfill $\Box$
\end{lemma}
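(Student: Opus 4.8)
The plan is to recognize Lemma \ref{alap} as a weighted-norm contraction of Harris--Hairer--Mattingly type and to prove it by lifting $\rho_\beta$ to a Kantorovich (Wasserstein-$1$) distance for a suitable metric on $\mathcal X$, then establishing the contraction one pair of points at a time via an explicit coupling. Concretely, for $\beta=\alpha_0/K$ I would introduce
\[
\tilde d(x,y):=(2+\beta V(x)+\beta V(y))\,\mathbf{1}_{\{x\neq y\}},\qquad x,y\in\mathcal X,
\]
and first record the identity $\rho_\beta(\mu_1,\mu_2)=W_{\tilde d}(\mu_1,\mu_2)$, where $W_{\tilde d}$ is the Kantorovich distance associated with $\tilde d$. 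The point of this step is that it turns contractivity of the linear map $L$ into a statement about Dirac masses: once I know $W_{\tilde d}(L(x_1,\cdot),L(x_2,\cdot))\le\bar\alpha\,\tilde d(x_1,x_2)$ for all $x_1,x_2$, with $\bar\alpha$ the asserted maximum, the bound $\rho_\beta(L\mu_1,L\mu_2)\le\bar\alpha\,\rho_\beta(\mu_1,\mu_2)$ follows by gluing couplings.

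For the identity I would argue by two inequalities. For $W_{\tilde d}\ge\rho_\beta$ I take $\phi:=(1+\beta V)\,\mathrm{sgn}\!\big(\tfrac{d(\mu_1-\mu_2)}{d|\mu_1-\mu_2|}\big)$; since $|\phi(x)-\phi(y)|\le(1+\beta V(x))+(1+\beta V(y))=\tilde d(x,y)$ for $x\neq y$, the function $\phi$ is $1$-Lipschitz for $\tilde d$, so Kantorovich duality gives $W_{\tilde d}\ge\int\phi\,d(\mu_1-\mu_2)=\rho_\beta(\mu_1,\mu_2)$. For the reverse inequality I use the decomposition $\mu_i=(\mu_1\wedge\mu_2)+\nu_i$ with $\nu_1\perp\nu_2$ and $\delta:=\nu_i(\mathcal X)$, place the common mass on the diagonal (where $\tilde d$ vanishes), and couple the residuals by $\nu_1\otimes\nu_2/\delta$; a direct computation of $\int\tilde d\,d\pi$ collapses to $2\delta+\beta\int V\,d\nu_1+\beta\int V\,d\nu_2=\rho_\beta(\mu_1,\mu_2)$, since $|\mu_1-\mu_2|=\nu_1+\nu_2$. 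The constant $2$ in $\tilde d$ is exactly what reconciles the probabilists' total variation with the normalization $\int|\mu_1-\mu_2|$ used in $\rho_\beta$, and getting this bookkeeping right is the first place to be careful.

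The heart of the argument is the pointwise contraction, proved by constructing a coupling $(Z_1,Z_2)$ of $L(x_1,\cdot)$ and $L(x_2,\cdot)$ and splitting on $S:=V(x_1)+V(x_2)$. If $S\ge R$ I use any coupling together with the drift: $E[\tilde d(Z_1,Z_2)]\le 2+\beta(LV(x_1)+LV(x_2))\le 2+\beta\gamma S+2\beta K$, and then convert the additive $2K$ into a multiplicative term via $2K\le(2K/R)S$ (valid precisely because $S\ge R$), obtaining $2+\beta\gamma_0 S$ with $\gamma_0=\gamma+2K/R$. Since $S\mapsto(2+\beta\gamma_0 S)/(2+\beta S)$ is decreasing (here $\gamma_0<1$, which is equivalent to $R>2K/(1-\gamma)$), this is at most $\gamma_1(2+\beta S)$ with $\gamma_1=(2+R\beta\gamma_0)/(2+R\beta)$, the value at $S=R$. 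If instead $S<R$ then $x_1,x_2\in C$, so I split each kernel as $L(x_i,\cdot)=\alpha\nu+(1-\alpha)\tilde R_i$ and send mass $\alpha$ to the diagonal through $\nu$, giving $P(Z_1\neq Z_2)\le 1-\alpha$ and hence $E[\tilde d(Z_1,Z_2)]\le 2(1-\alpha)+\beta\gamma S+2\beta K=2(1-(\alpha-\alpha_0))+\beta\gamma S$ after substituting $\beta=\alpha_0/K$; this is at most $\bar\alpha(2+\beta S)$, because $1-(\alpha-\alpha_0)\le\bar\alpha$ controls the constant term and $\gamma<\gamma_1\le\bar\alpha$ the linear one. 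Taking the worse of the two regimes yields exactly $\bar\alpha=\max\{1-(\alpha-\alpha_0),\gamma_1\}$.

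Finally I would lift to general measures: for an arbitrary coupling $\pi$ of $\mu_1,\mu_2$ and the (measurably chosen) couplings $\Gamma_{x_1,x_2}$ above, the mixture $\int\Gamma_{x_1,x_2}\,\pi(dx_1,dx_2)$ is a coupling of $L\mu_1,L\mu_2$ whose $\tilde d$-cost is at most $\bar\alpha\int\tilde d\,d\pi$; taking the infimum over $\pi$ gives $W_{\tilde d}(L\mu_1,L\mu_2)\le\bar\alpha\,W_{\tilde d}(\mu_1,\mu_2)$, and the identity of the second paragraph closes the proof. I expect the main obstacle to be not either case in isolation but the simultaneous bookkeeping: choosing the threshold at $S=R$ and the weight $\beta=\alpha_0/K$ so that the additive drift constant $K$ and the minorization defect $1-\alpha$ turn into the two clean competing factors, together with the elementary monotonicity $\gamma<\gamma_1<1$ that lets the far-field estimate dominate the near-field linear term. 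A minor but genuine point to verify along the way is the joint measurability of $(x_1,x_2)\mapsto\Gamma_{x_1,x_2}$, needed for the mixture coupling to be well defined.
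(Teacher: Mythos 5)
Your proof is correct. The paper itself does not prove Lemma \ref{alap} at all---it simply invokes Theorem 3.1 of \cite{hm}---and your argument is essentially a reconstruction of that cited proof: the identification of $\rho_\beta$ with the Kantorovich distance for $\tilde d(x,y)=(2+\beta V(x)+\beta V(y))\mathbf{1}_{\{x\neq y\}}$ is Hairer--Mattingly's Lemma 2.1, and your two-regime estimate (drift plus the bound $2K\leq (2K/R)S$ when $V(x_1)+V(x_2)\geq R$, minorization with $\beta=\alpha_0/K$ when $V(x_1)+V(x_2)<R$, then taking the worse factor) reproduces their contraction computation exactly, merely phrased with explicit couplings rather than their dual test-function calculation, with only trivial bookkeeping left implicit (the diagonal coupling for $x_1=x_2$, and the measurable explicit formulas for $\Gamma_{x_1,x_2}$ that settle the measurability point you flag).
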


For the proof, see Theorem 3.1 in \cite{hm}.
Next comes an easy corollary.

\begin{lemma}\label{lett} 
Let $L:\mathcal{X}\times\mathfrak{B}\to [0,1]$ be a probabilistic kernel such that
\begin{equation}\label{est1}
LV(x)\leq (1-\lambda) V(x)+K,\ x\in\mathcal{X},
\end{equation}
for some $0<\lambda\leq 1/3$, $K>0$. Let
$C:=\{x\in\mathcal{X}:\, V(x)\leq R \}$ with $R:=4K/\lambda$. 
Assume that there is a probability $\nu$ on $\mathfrak{B}$ such that
\begin{equation}\label{est2}
\inf_{x\in C} L(x,A)\geq \alpha \nu(A),\ A\in\mathfrak{B},
\end{equation}
for some $0<\alpha\leq 1/3$.
Then
\[
\rho_{\beta}(L\mu_1,L\mu_2)\leq \left(1-\frac{\alpha\lambda}{2}\right)\rho_{\beta}(\mu_1,\mu_2),\ \mu_1,\mu_2\in\mathcal{P}_V,
\]
holds for $\beta=\alpha/2K$. 
\end{lemma}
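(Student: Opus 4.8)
The plan is to apply Lemma~\ref{alap} directly, reading off the correct parameter substitutions and then checking that the two-term maximum appearing there collapses to the advertised contraction factor $1-\alpha\lambda/2$ thanks to the standing bounds $\lambda,\alpha\leq 1/3$. First I would set $\gamma:=1-\lambda$, so that the drift hypothesis \eqref{est1} is exactly the drift hypothesis of Lemma~\ref{alap}. The admissibility requirement $R>2K/(1-\gamma)$ there becomes $R>2K/\lambda$, which holds with room to spare since $R=4K/\lambda$; the minorization \eqref{est2} matches the corresponding hypothesis of Lemma~\ref{alap} verbatim with the given $\alpha$.

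Next I would fix the free parameter $\alpha_0$. Lemma~\ref{alap} requires $\alpha_0\in(0,\alpha)$ and produces a contraction in $\rho_\beta$ with $\beta=\alpha_0/K$; to land on the target $\beta=\alpha/(2K)$ I simply take $\alpha_0:=\alpha/2$, which indeed lies in $(0,\alpha)$. With this choice the first entry of the maximum is $1-(\alpha-\alpha_0)=1-\alpha/2$, and since $\lambda\leq 1$ this is already bounded by $1-\alpha\lambda/2$.

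The only genuine computation is to bound the second entry of the maximum. Here $\gamma_0=\gamma+2K/R=(1-\lambda)+\lambda/2=1-\lambda/2$, while $R\beta=(4K/\lambda)(\alpha/2K)=2\alpha/\lambda$. Substituting these into the fraction gives
\[
\frac{2+R\beta\gamma_0}{2+R\beta}=1-\frac{\alpha}{2+2\alpha/\lambda},
\]
so the desired estimate $\frac{2+R\beta\gamma_0}{2+R\beta}\leq 1-\alpha\lambda/2$ reduces, after clearing the positive denominator, to $\lambda+\alpha\leq 1$. This is exactly where the hypotheses enter: from $\lambda,\alpha\leq 1/3$ we get $\lambda+\alpha\leq 2/3<1$, so both entries of the maximum are at most $1-\alpha\lambda/2$, and Lemma~\ref{alap} yields the claim.

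The hard part, such as it is, is this last inequality for the fraction; everything else is bookkeeping of the parameter identifications $\gamma=1-\lambda$, $\alpha_0=\alpha/2$, $\beta=\alpha/(2K)$. In fact the role of the choice $R=4K/\lambda$ (rather than the borderline $2K/\lambda$) together with $\lambda,\alpha\leq 1/3$ is precisely to guarantee the slack $\lambda+\alpha\leq 1$, thereby turning the sharp but awkward Hairer--Mattingly bound of Lemma~\ref{alap} into a clean, uniform geometric rate. I would therefore expect the write-up to be short, consisting essentially of the substitutions above followed by the one-line reduction to $\lambda+\alpha\leq 1$.
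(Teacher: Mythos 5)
Your proof is correct and takes essentially the same route as the paper's: both apply Lemma~\ref{alap} with $\gamma=1-\lambda$, $\alpha_0=\alpha/2$, $\beta=\alpha/(2K)$, and reduce the bound on the second entry of the maximum to the inequality $\lambda+\alpha\leq 1$, which follows from $\lambda,\alpha\leq 1/3$ (the paper phrases this as $\frac{\alpha_0\lambda}{\lambda+2\alpha_0}\geq\alpha_0\lambda$, which is the same reduction). The only cosmetic difference is that you explicitly verify the admissibility condition $R>2K/(1-\gamma)$, which the paper leaves implicit.
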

\begin{proof} 
Choose $\gamma:=1-\lambda$, and let $\alpha_0:=\alpha/2$.
Note that $1-(\alpha-\alpha_0)= 1-\alpha/2$ and $R\beta=4\alpha_0/(1-\gamma)$
holds for $\beta=\alpha_0/K$.
Applying Lemma \ref{alap}, we estimate
\begin{eqnarray*}
\rho_{\beta}(L\mu_1,L\mu_2) &\leq&\\
\max\left\{
1-(\alpha-\alpha_0),\frac{2+R\beta\gamma_0}{2+R\beta}\right\}\rho_{\beta}(\mu_1,\mu_2)
&=&\\
\max\left\{
1-\alpha/2,1-\frac{4\alpha_0(1-\gamma_0)/(1-\gamma)}{2+4\alpha_0/(1-\gamma)}\right\}\rho_{\beta}(\mu_1,\mu_2). & &
\end{eqnarray*}
Here 
\[
\frac{4\alpha_0(1-\gamma_0)/(1-\gamma)}{2+4\alpha_0/(1-\gamma)}=
\frac{\alpha_0\lambda}{\lambda+2\alpha_0}\geq \alpha_0\lambda
\]
and we get the statement since $\alpha/2\geq \alpha_0\lambda$.
\end{proof}

We introduce some important notation now. Let us consider $\mathfrak{Y}$ equipped by its Borel sigma-algebra $\mathfrak{T}$. 
If $(\mathbf{y},A)\to L(\mathbf{y},A)$, $\mathbf{y}\in\mathfrak{Y}$, $A\in\mathfrak{B}$
is a (not necessarily probabilistic) kernel and $Z$ is a $\mathfrak{Y}$-valued random variable then we define
a measure $\mathcal{E}[L(Z)](\cdot)$ on $\mathfrak{B}$ via
\begin{equation}\label{proba}
\mathcal{E}[L(Z)](A):=E[L(Z,A)],\ A\in\mathfrak{B}.
\end{equation}

We will use the following trivial inequalities in the sequel:
\begin{equation}\label{tvo}
\rho_0(\cdot)\leq 2,\quad 
\rho_0(\cdot)\leq \rho_{\beta}(\cdot)\leq \rho_1(\cdot)\leq \left(1+\frac{1}{\beta}\right)\rho_{\beta}(\cdot),\
0<\beta\leq 1.
\end{equation}

\begin{proof}[Proof of Theorem \ref{limit}.] For later use, we define the $\mathfrak{Y}$-valued random variables 
$\hat{\mathbf{Y}}_{n}:=(Y_{n+j})_{j\in\mathbb{Z}}$, for each $n\in\mathbb{Z}$.
Note that $\mathbf{Y}=\hat{\mathbf{Y}}_{0}$.
Fix $\mathbf{y}:=(y_j)_{j\in\mathbb{Z}}\in \mathfrak{Y}$ for the moment.
Set $\hat{\mathbf{y}}_{n}:=(y_{n+j})_{j\in\mathbb{Z}}$, for each $n\in\mathbb{Z}$. Again, $\mathbf{y}=\hat{\mathbf{y}}_{0}$.
Define 
\begin{equation}\label{dafie}
\mu_0(\mathbf{y}):=\tilde{\mu}_{0}(\mathbf{y}),\ \mu_n(\mathbf{y}):=Q(y_0)Q(y_{-1})\ldots Q(y_{-n+1})
\tilde{\mu}_{0}(\hat{\mathbf{y}}_{-n+1}),\ n\geq 1.
\end{equation}
Here $Q(y)$ is the operator acting on probabilities which is described in \eqref{taho} above but, 
instead of $L(x,A)$, with the kernel $Q(y,x,A)$.
Fix $n\geq 1$ and denote $\bar{y}_n:=\max_{-n+1\leq j\leq 0}\Vert y_j\Vert$. Since
\[
\alpha(\Vert y_j\Vert)\geq \alpha(\bar{y}_n),\ \lambda(\Vert y_j\Vert)\geq 
\lambda(\bar{y}_n),\ 
K(\Vert y_j\Vert)\leq 
K(\bar{y}_n),
\]
for each $-n+1\leq j\leq 0$, \eqref{est1} and \eqref{est2} hold for $L=Q(y_j)$,
$j=-n+1,\ldots,0$ with $K=K(\bar{y}_n)$, $\lambda=\lambda(\bar{y}_n)$
and $\alpha=\alpha(\bar{y}_n)$. An $n$-fold application of 
Lemma \ref{lett} implies that, for $\beta=\alpha(\bar{y}_n)/2K(\bar{y}_n)$,
\[
\rho_{\beta}(\mu_n(\mathbf{y}),\mu_{n+1}(\mathbf{y}))\leq (1-\alpha(\bar{y}_n)\lambda(\bar{y}_n)/2)^{n}
\rho_{\beta}(\tilde{\mu}_0(\hat{\mathbf{y}}_{-n+1}),Q(y_{-n})\tilde{\mu}_{0}
(\hat{\mathbf{y}}_{-n})).
\]
By \eqref{tvo} and by $K(\cdot)/\alpha(\cdot)\geq 1$, 
\begin{eqnarray}\nonumber
\rho_{1}(\mu_n(\mathbf{y}),\mu_{n+1}(\mathbf{y})) &\leq&\\
\nonumber \left(1+\frac{2K(\bar{y}_n)}{\alpha(\bar{y}_n)}\right)
(1-\alpha(\bar{y}_n)\lambda(\bar{y}_n)/2)^n \rho_{\beta}(\tilde{\mu}_0(\hat{\mathbf{y}}_{-n+1}),Q(y_{-n})\tilde{\mu}_{0}
(\hat{\mathbf{y}}_{-n}))
&\leq& \\ 
\frac{3K(\bar{y}_n)}{\alpha(\bar{y}_n)}
(1-\alpha(\bar{y}_n)\lambda(\bar{y}_n)/2)^n \rho_1(\tilde{\mu}_0(\hat{\mathbf{y}}_{-n+1}),Q(y_{-n})\tilde{\mu}_{0}
(\hat{\mathbf{y}}_{-n})). & &\label{palfus}
\end{eqnarray}


We thus arrive at
\begin{eqnarray*}
& & E[\rho_1(\mu_n(\mathbf{Y}),\mu_{n+1}(\mathbf{Y}))]\nonumber\\
&\leq&
3E\left[\frac{K(M_n)}{\alpha(M_n)}
(1-\alpha(M_n)\lambda(M_n)/2)^{n}\rho_1(\tilde{\mu}_0(\hat{\mathbf{Y}}_{-n+1}),
Q(Y_{-n})\tilde{\mu}_0(\hat{\mathbf{Y}}_{-n}))\right],\label{rr}
\end{eqnarray*}
using the notation $M_n:=\max_{-n+1\leq i\leq 0}\Vert Y_i\Vert$.
We now estimate the expectation on the right-hand side of \eqref{palfus} separately on the 
events $\{M_n\geq g(n)\}$ and $\{M_n< g(n)\}$. 
Note that
\begin{eqnarray*}
& & E\left[\frac{K(M_n)}{\alpha(M_n)}\left(1-\frac{\alpha(M_n)\lambda(M_n)}{2}\right)^n
\rho_1(\tilde{\mu}_0(\hat{\mathbf{Y}}_{-n+1}),
Q(Y_{-n})\tilde{\mu}_0(\hat{\mathbf{Y}}_{-n}))1_{\{|M_n|\geq g(n)\}}\right]\\
&\leq& \sum_{k=n}^{\infty} \frac{K(g(k+1))}{\alpha(g(k+1))}\left(1-\frac{\alpha(g(k+1))\lambda(g(k+1))}{2}\right)^n
E\left[\rho_1(\tilde{\mu}_0(\hat{\mathbf{Y}}_{-n+1}),
Q(Y_{-n})\tilde{\mu}_0(\hat{\mathbf{Y}}_{-n})) 1_{\{g(k+1)>|M_n|\geq g(k)\}}\right]\\
&\leq& \sum_{k=n}^{\infty} \frac{K(g(k+1))}{\alpha(g(k+1))}\left(1-\frac{\alpha(g(k+1))\lambda(g(k+1))}{2}\right)^n
E\left[\rho_1(\tilde{\mu}_0(\hat{\mathbf{Y}}_{-n+1}),
Q(Y_{-n})\tilde{\mu}_0(\hat{\mathbf{Y}}_{-n}))1_{\{|M_n|\geq g(k)\}}\right].
\end{eqnarray*}
Hence
\begin{eqnarray*}
& & \sum_{m=n}^{\infty} E[\rho_1(\mu_m(\mathbf{Y}),\mu_{m+1}(\mathbf{Y}))]\\
&\leq& 3\sum_{m=n}^{\infty} 
\frac{K(g(m))}{\alpha(g(m))}e^{-\frac{m}{2}\alpha(g(m))\lambda(g(m))}E\left[\rho_1(\tilde{\mu}_0(\hat{\mathbf{Y}}_{-m+1}),
Q(Y_{-m})\tilde{\mu}_0(\hat{\mathbf{Y}}_{-m})) 1_{\{|M_m|< g(m)\}}\right]\\
&+& 3\sum_{m=n}^{\infty}\sum_{k=m}^{\infty} \frac{K(g(k+1))}{\alpha(g(k+1))}\left(1-\frac{\alpha(g(k+1))\lambda(g(k+1))}{2}\right)^m 
E\left[\rho_1(\tilde{\mu}_0(\hat{\mathbf{Y}}_{-m+1}),
Q(Y_{-m})\tilde{\mu}_0(\hat{\mathbf{Y}}_{-m}))1_{\{|M_m|\geq g(k)\}}\right]\\
&\leq& 3\sum_{m=n}^{\infty} 
\frac{K(g(m))}{\alpha(g(m))}e^{-\frac{m}{2}\alpha(g(m))\lambda(g(m))}E\left[\rho_1(\tilde{\mu}_0(\hat{\mathbf{Y}}_{-m+1}),
Q(Y_{-m})\tilde{\mu}_0(\hat{\mathbf{Y}}_{-m}))\right]\\
&+& 3\sum_{k=n}^{\infty}\sum_{m=n}^k  
\frac{K(g(k+1))}{\alpha(g(k+1))}\left(1-\frac{\alpha(g(k+1))\lambda(g(k+1))}{2}\right)^m
E\left[\rho_1(\tilde{\mu}_0(\hat{\mathbf{Y}}_{-m+1}),
Q(Y_{-m})\tilde{\mu}_0(\hat{\mathbf{Y}}_{-m}))1_{\{|M_k|\geq g(k)\}}\right]
\\
&\leq& 3\sum_{m=n}^{\infty} 
\frac{K(g(m))}{\alpha(g(m))}e^{-\frac{m}{2}\alpha(g(m))\lambda(g(m))}E\left[\rho_1(\tilde{\mu}_0(\hat{\mathbf{Y}}_{-m+1}),
Q(Y_{-m})\tilde{\mu}_0(\hat{\mathbf{Y}}_{-m}))\right]\\
&+& 6\sum_{k=n}^{\infty}  
\frac{K(g(k+1))}{\alpha^2(g(k+1))\lambda(g(k+1))}E^{1/2}\left[\rho_1^{2}(\tilde{\mu}_0(\hat{\mathbf{Y}}_{1}),
Q(Y_{0})\tilde{\mu}_0(\mathbf{Y}))\right]
P^{1/2}(|M_k|\geq g(k))\\
&\leq& 3E\left[\rho_1(\tilde{\mu}_0(\hat{\mathbf{Y}}_{1}),
Q(Y_{0})\tilde{\mu}_0(\mathbf{Y}))\right] \sum_{m=n}^{\infty} 
\frac{K(g(m))}{\alpha(g(m))}e^{-\frac{m}{2}\alpha(g(m))\lambda(g(m))}\\
&+& 6E^{1/2}\left[\rho_1^{2}(\tilde{\mu}_0(\hat{\mathbf{Y}}_{1}),
Q(Y_{0})\tilde{\mu}_0(\mathbf{Y}))\right]\sum_{k=n}^{\infty} 
\frac{K(g(k+1))}{\alpha^2(g(k+1))\lambda(g(k+1))}\sqrt{\ell(k)},
\end{eqnarray*}
where we have used $M_{k}\geq M_{m}$ in the second inequality;
the closed form expression for the sum of geometric series and Cauchy-Schwarz in the third inequality;
Assumption \ref{stary} and the fact that the law of 
$\rho_1(\tilde{\mu}_0(\hat{\mathbf{Y}}_{1}),
Q(Y_{0})\tilde{\mu}_0(\mathbf{Y}))$ equals that of $$
\rho_1(\tilde{\mu}_0(\hat{\mathbf{Y}}_{-m+1}),
Q(Y_{-m})\tilde{\mu}_0(\hat{\mathbf{Y}}_{-m})),
$$ 
for each $m$, in the fourth inequality.
 
Recall that
\begin{eqnarray*}
& & E[\rho_1^{2}(\tilde{\mu}_0(\hat{\mathbf{Y}}_{1}),
Q(Y_{0})\tilde{\mu}_0(\mathbf{Y}))]\\
&\leq& E\left[\int_{\mathcal{X}} 2(1+V(z))^{2} [\tilde{\mu}_0(\hat{\mathbf{Y}}_{1})+
Q(Y_{0})\tilde{\mu}_0(\mathbf{Y})](dz) \right]\\
&=& 
E\left[\int_{\mathcal{X}} 2(1+V(z))^{2} [\tilde{\mu}_0(\mathbf{Y})+
Q(Y_{0})\tilde{\mu}_0(\mathbf{Y})](dz) \right]<\infty
\end{eqnarray*}
by \eqref{johnson}. A fortiori, $E[\rho_1(\tilde{\mu}_0(\hat{\mathbf{Y}}_{1}),
Q(Y_{0})\tilde{\mu}_0(\mathbf{Y}))]<\infty$, too.


Now it follows from $r_1(0)+r_2(0)<\infty$ that
\begin{equation}\label{morkonn}
\sum_{n=1}^{\infty}E[\rho_1(\mu_n(\mathbf{Y}),\mu_{n+1}(\mathbf{Y}))]<\infty.
\end{equation}
Consequently, for a.e.\ $\omega$, the sequence $\mu_n(\mathbf{Y}(\omega))$, $n\in\mathbb{N}$
is Cauchy and hence convergent for the metric $\rho_{1}$. Its limit is denoted
by $\mu_{\sharp}(\omega)$. 

For later use, we remark that $\omega\to \int_{\mathcal{X}}\phi(z)\mu_{\sharp}(\omega)(dz)$ is
$\sigma(\mathbf{Y})$-measurable for every $\phi\in \Phi(V)$.  
Hence there is a measurable $\Psi_{\phi}:\mathfrak{Y}\to\mathbb{R}$ such that 
\begin{equation}\label{defie}
\Psi_{\phi}(\mathbf{Y})=\int_{\mathcal{X}}\phi(z)\mu_{\sharp}(dz)\mbox{ a.s.}	
\end{equation}
    
In the sequel we will need 
the definition \eqref{proba} for the kernel 
$(\mathbf{y},A)\to\mu_n(\mathbf{y})(A)$, $\mathbf{y}\in\mathfrak{Y}$,
$A\in\mathfrak{B}$ and for similar kernels.
Notice that, for any measurable function $w:\mathcal{X}\to\mathbb{R}_+$,
\begin{equation}\label{jobim}
\int_{\mathcal{X}} w(z)\,
\left|\mathcal{E}[\mu_n(\mathbf{Y})]-\mathcal{E}[\mu_{n+1}(\mathbf{Y})]\right|(dz)
\leq \int_{\mathcal{X}} w(z)\,
\mathcal{E}\left[\left|\mu_n(\mathbf{Y})-\mu_{n+1}(\mathbf{Y})\right|\right](dz). 
\end{equation}
This is trivial for indicators and then follows for all measurable $w$ in a standard way.
By similar arguments, we also have
$$
\int_{\mathcal{X}}w(z)
\mathcal{E}\left[\left|\mu_n(\mathbf{Y})-\mu_{n+1}(\mathbf{Y})\right|\right](dz)=
E\left[\int_{\mathcal{X}}w(z)
\left|\mu_n(\mathbf{Y})-\mu_{n+1}(\mathbf{Y})\right|(dz)\right].
$$

As easily seen, $\mu_{n}=\mathcal{E}[\mu_n(\mathbf{Y})]$ so we infer that
\begin{eqnarray*}
\rho_1(\mu_n,\mu_{n+1})=\int_{\mathcal{X}}(1+V(z))
\left|\mathcal{E}[\mu_n(\mathbf{Y})]-\mathcal{E}[\mu_{n+1}(\mathbf{Y})]\right|(dz) &\leq &\\
\int_{\mathcal{X}}(1+V(z))
\mathcal{E}\left[\left|\mu_n(\mathbf{Y})-\mu_{n+1}(\mathbf{Y})\right|\right](dz) &=&\\
E\left[\int_{\mathcal{X}}(1+V(z))
\left|\mu_n(\mathbf{Y})-\mu_{n+1}(\mathbf{Y})\right|(dz)\right] &=&\\
E[\rho_1(\mu_n(\mathbf{Y}),\mu_{n+1}(\mathbf{Y}))]. & &
\end{eqnarray*}

Then it follows from \eqref{morkonn} that
\[
\sum_{n=1}^{\infty}\rho_1(\mu_n,\mu_{n+1})<\infty,
\]
so $\mu_n$, $n\geq 0$ is a Cauchy sequence for the complete metric $\rho_1$.
Hence it converges to some probability $\mu_*$ as $n\to\infty$. The claimed convergence rate also follows by the above estimates.

To show uniqueness, let $X_{0}'$ be another initial condition satisfying Assumption \ref{init}, with the corresponding 
$\tilde{\mu}_{0}'(\mathbf{y})$, see \eqref{senegal}.
Defining, just like in \eqref{dafie} above,
$$
\mu_0'(\mathbf{y}):=\tilde{\mu}_{0}'(\mathbf{y}),\ \mu_n'(\mathbf{y}):=
Q(y_0)Q(y_{-1})\ldots Q(y_{-n+1})\tilde{\mu}_{0}'(\hat{\mathbf{y}}_{-n+1}),\ n\geq 1,
$$
the above estimates show that
\begin{eqnarray*}
\rho_{1}(\mathcal{E}[\mu_{n}'(\mathbf{Y})],\mathcal{E}[\mu_{n}(\mathbf{Y})]) &\leq&{}
E[\rho_{1}(\mu_{n}'(\mathbf{Y}),\mu_{n}(\mathbf{Y}))]\\
&\leq& 3E\left[\rho_1(\tilde{\mu}_0(\mathbf{Y}),
\tilde{\mu}_0'(\mathbf{Y}))\right] \sum_{m=n}^{\infty} 
\frac{K(g(m))}{\alpha(g(m))}e^{-\frac{m}{2}\alpha(g(m))\lambda(g(m))}\\
&+& 6E^{1/2}\left[\rho_1^{2}(\tilde{\mu}_0(\mathbf{Y}),
\tilde{\mu}_0'(\mathbf{Y}))\right]\sum_{k=n}^{\infty} 
\frac{K(g(k+1))}{\alpha^2(g(k+1))\lambda(g(k+1))}\sqrt{\ell(k)},
\end{eqnarray*}
which tends to $0$ when $n\to\infty$ since, as before,
$$
E\left[\rho_1^{2}(\tilde{\mu}_0(\mathbf{Y}),
\tilde{\mu}_0'(\mathbf{Y}))\right]<\infty
$$
by Assumption \ref{init}. 
\end{proof}

\begin{remark}\label{wekings}
{\rm Define the probability $\bar{\mu}(A):=E[\mu_{\sharp}(A)]$, $A\in\mathfrak{B}$.
It is clear that, for every $\phi\in\Phi(1)$,
\begin{eqnarray*}
& & \int_{\mathcal{X}}\phi(z)\mu_{*}(dz)\\
&=& \lim_{n\to\infty}\int_{\mathcal{X}}\phi(z)\mu_{n}(dz)\\
&=& \lim_{n\to\infty}\int_{\mathcal{X}}\phi(z)\mathcal{E}[\mu_{n}(\mathbf{Y})](dz)\\
&=& \lim_{n\to\infty}E\left[\int_{\mathcal{X}} \phi(z) \mu_n(\mathbf{Y})(dz)\right]\\
&=& E\left[\int_{\mathcal{X}}\phi(z)\mu_{\sharp}(dz)\right]\\
&=& \int_{\mathcal{X}}\phi(z)\bar{\mu}(dz),	
\end{eqnarray*} 
hence $\bar{\mu}=\mu_{*}$.}
\end{remark}

\begin{remark}{\rm The proof of Theorem \ref{limit} also implies convergence for the ``quenched'' process:
there is a set $\mathfrak{Y}'\subset\mathfrak{Y}$ with $\zeta(\mathfrak{Y}')=1$ (recall that $\zeta$ is
the law of $\mathbf{Y}$) such that, for all $\mathbf{y}\in\mathfrak{Y}'$,
the sequence $\mu_{n}(\mathbf{y})$ converges in $\rho_{1}$ to a limiting probability as $n\to\infty$.}
\end{remark}

\begin{proof}[Proof of Theorem \ref{limit2}.]
Estimates of Theorem \ref{limit} and \eqref{tvo} imply
\[
\rho_{0}(\mu_n(\mathbf{y}_n),\mu_{n+1}(\mathbf{y}_{n+1}))\leq (1-\alpha(\bar{y}_n)\lambda(\bar{y}_n)/2)^{n}
\rho_{1}(\tilde{\mu}_0(\hat{\mathbf{y}}_{-n+1}),Q(y_{-n})\tilde{\mu}_0(\hat{\mathbf{y}}_{-n})).
\]
This leads to
\begin{eqnarray*}
\rho_0(\mu_n,\mu_{n+1})\leq E[\rho_0(\mu_n(\mathbf{Y}),\mu_{n+1}(\mathbf{Y}))] &\leq&\\
(1-\alpha(g(n))\lambda(g(n))/2)^n 
E[\rho_1(\tilde{\mu}_0(\hat{\mathbf{Y}}_{-n+1}),
Q(Y_{-n})\tilde{\mu}_0(\hat{\mathbf{Y}}_{-n}))1_{\{M_n<g(n)\}}]+2P\left(M_n\geq g(n)\right) &\leq&\\
(1-\alpha(g(n))\lambda(g(n))/2)^n 
E[\rho_1(\tilde{\mu}_0(\hat{\mathbf{Y}}_{1}),
Q(Y_{0})\tilde{\mu}_0(\mathbf{Y}))]+2P\left(M_n\geq g(n)\right) 
&\leq &\\
C[e^{-n\alpha(g(n))\lambda(g(n))/2}+{\ell(n)}], & &
\end{eqnarray*}
for some $C>0$, using \eqref{tvo}, Assumption \ref{stary} and \eqref{johnson1}.
The result now follows as in the proof of Theorem \ref{limit} above.
\end{proof}

\begin{remark}
{\rm The convergence rates obtained by our method heavily depend on the choice of the functions $g$ and $\ell$ for which there are
multiple options. Hence no optimality can be claimed. The approach, however, works in many cases where available methods do not.}
\end{remark}
 
\section{$L$-mixing processes}\label{lm}

Let $\mathcal{G}_t$, $t\in\mathbb{N}$ be an increasing sequence of sigma-algebras (i.e. a discrete-time filtration) and let $\mathcal{G}^+_t$, $t\in\mathbb{N}$
be a \emph{decreasing} sequence of sigma-algebras such that, for each $t\in\mathbb{N}$, $\mathcal{G}_t$ is independent of $\mathcal{G}^+_t$.

Let  $W_t$, $t\in\mathbb{N}$ be a real-valued stochastic process. For each $r\geq 1$, introduce
$$
M_r(W):=\sup_{t\in\mathbb{N}} E^{1/r}[|W_t|^r].
$$
For each process $W$ such that $M_1(W)<\infty$ we also define, for each $r\geq 1$, the quantities
$$
\gamma_r(W,\tau):=\sup_{t\geq\tau}E^{1/r}[|W_t-E[W_t|\mathcal{G}_{t-\tau}^+]|^r],\ \tau\in\mathbb{N},\ \Gamma_r(W):=\sum_{\tau=0}^{\infty} \gamma_r(W,\tau).
$$

For some $r\geq 1$, the process $W$ is called 
\emph{$L$-mixing of order $r$} with respect to $(\mathcal{G}_t,\mathcal{G}^+_t)$, $t\in\mathbb{N}$ if
it is adapted to $(\mathcal{G}_t)_{t\in\mathbb{N}}$ and $M_r(W)<\infty$, $\Gamma_r(W)<\infty$. We say that $W$
is \emph{$L$-mixing} if it is $L$-mixing of order $r$ for all $r\geq 1$. This notion of mixing was introduced in \cite{laci1}.

\begin{remark}\label{utu} {\rm It is easy to check that if $W_t$, 
$t\in\mathbb{N}$ is $L$-mixing of order $r$ then
also the process $\tilde{W}_t:=W_t-EW_t$, $t\in\mathbb{N}$ is $L$-mixing of order $r$, moreover,
$\Gamma_r(\tilde{W})=\Gamma_r(W)$ and $M_r(\tilde{W})\leq 2M_r(W)$.}
\end{remark}

The next lemma (Lemma 2.1 of \cite{laci1}) is useful when checking the $L$-mixing property
for a given process.

\begin{lemma}\label{fyffes} Let 
$\mathcal{G}\subset\mathcal{F}$ be a sigma-algebra,
$X$, $Y$ random variables with $E^{1/r}[|X|^r]<\infty$, $E^{1/r}[|Y|^r]<\infty$ with some $r\geq 1$.
If ${Y}$ is $\mathcal{G}$-measurable then
$$
E^{1/r}[|X-E[X\vert\mathcal{G}]|^r]\leq 2E^{1/r}[|X-Y|^r]
$$
holds.\hfill $\Box$ 
\end{lemma}

$L$-mixing is, in many cases, easier to show than other, better-known mixing concepts and
it leads to useful inequalities like Lemma \ref{inek} below. For further
related results, see \cite{laci1}.

\begin{lemma}\label{inek}
For an $L$-mixing process $W$ of order $r\geq 2$ satisfying $E[W_t]=0$, $t\in\mathbb{N}$,
$$
E^{1/r}\left[\left|\sum_{i=1}^N W_i\right|^r\right]\leq C_r N^{1/2} M_r^{1/2}(W)\Gamma_r^{1/2}(W),
$$
holds for each $N\geq 1$ with a constant $C_r$ that does not depend either on $N$ or on $W$.
\end{lemma}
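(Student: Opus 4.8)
Throughout write $S_N:=\sum_{i=1}^N W_i$. The plan is to turn the sum into a martingale and apply a Burkholder-type inequality, using the two filtrations to control the martingale increments by the mixing coefficients. First I would reduce to the centered case: by Remark \ref{utu}, replacing $W$ by $W-EW$ changes neither $\Gamma_r(W)$ nor, up to a factor $2$, $M_r(W)$, so I may assume $E[W_t]=0$. Two elementary facts will be used repeatedly. Since $W_t$ is $\mathcal G_t$-measurable and $\mathcal G_t$ is independent of $\mathcal G^+_t$, we have $E[W_t\mid\mathcal G^+_t]=E[W_t]=0$; hence $\gamma_r(W,0)=M_r(W)$ and in particular $M_r(W)\le\Gamma_r(W)$, which lets me absorb all ``diagonal'' contributions into the target bound. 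Next, for any $i$ the variable $E[W_i\mid\mathcal G^+_{0}]$ is independent of $\mathcal G_0$, so $E\big[E[W_i\mid\mathcal G^+_0]\mid\mathcal G_0\big]=0$ and therefore $\|E[W_i\mid\mathcal G_0]\|_r=\|E[W_i-E[W_i\mid\mathcal G^+_0]\mid\mathcal G_0]\|_r\le\gamma_r(W,i)$, while trivially $\|E[W_i\mid\mathcal G_0]\|_r\le M_r(W)$.

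Next I decompose $S_N-E[S_N\mid\mathcal G_0]=\sum_{j=1}^N D_j$ with $D_j:=E[S_N\mid\mathcal G_j]-E[S_N\mid\mathcal G_{j-1}]$, a martingale difference sequence for $(\mathcal G_j)$. Since $r\ge2$, I would invoke the Burkholder--Davis--Gundy inequality and then Minkowski's inequality in $L^{r/2}$:
\[
\big\|S_N-E[S_N\mid\mathcal G_0]\big\|_r\le C_r\Big\|\big(\sum_{j=1}^N D_j^2\big)^{1/2}\Big\|_r=C_r\Big\|\sum_{j=1}^N D_j^2\Big\|_{r/2}^{1/2}\le C_r\Big(\sum_{j=1}^N\|D_j\|_r^2\Big)^{1/2},
\]
which produces the desired factor $N^{1/2}$. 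The leftover term is harmless: by the first paragraph $\|E[S_N\mid\mathcal G_0]\|_r\le\sum_{i=1}^N\min\{\gamma_r(W,i),M_r(W)\}\le\min\{\Gamma_r(W),N M_r(W)\}\le\sqrt{N\,M_r(W)\Gamma_r(W)}$, using $\min\{a,b\}\le\sqrt{ab}$. Thus everything reduces to the estimate $\sum_{j=1}^N\|D_j\|_r^2\le C\,N\,M_r(W)\Gamma_r(W)$. Writing $R_j:=\sum_{i>j}W_i$ and using that $W_i$ is $\mathcal G_i$-measurable, $D_j=(W_j-E[W_j\mid\mathcal G_{j-1}])+(E[R_j\mid\mathcal G_j]-E[R_j\mid\mathcal G_{j-1}])$. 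The first summand has $L^r$-norm at most $2M_r(W)$, contributing at most $4N M_r^2(W)\le 4N M_r(W)\Gamma_r(W)$ to the sum of squares. For the second summand $T_j$, since $E[R_j\mid\mathcal G^+_j]$ is $\mathcal G^+_j$-measurable and hence independent of $\mathcal G_{j-1}\subseteq\mathcal G_j$, subtracting it leaves both conditional expectations unchanged, and contractivity of conditional expectation (in the spirit of Lemma \ref{fyffes}) gives $\|T_j\|_r\le 2\big\|\sum_{i>j}(W_i-E[W_i\mid\mathcal G^+_j])\big\|_r$, each increment being controlled by $\gamma_r(W,i-j)$.

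The hard part — and the step I expect to be the genuine obstacle — is to bound the future-tail innovations sharply, namely to show $\sum_{j=1}^N\|T_j\|_r^2\le C\,N\,M_r(W)\Gamma_r(W)$. The triangle inequality alone only yields $\|T_j\|_r\le 2\Gamma_r(W)$, hence the weaker $N\Gamma_r^2(W)$; the geometric mean $M_r^{1/2}\Gamma_r^{1/2}$ forces one to exploit genuine cancellation among the increments $W_i-E[W_i\mid\mathcal G^+_j]$ as $i$ ranges over the whole future, which no $\min\{a,b\}\le\sqrt{ab}$ device recovers, since the tail $R_j$ is long for most $j$. The model to imitate is $r=2$, where the martingale identity gives $\sum_j\|D_j\|_2^2=\|S_N-E[S_N\mid\mathcal G_0]\|_2^2$ and the covariance bound $|E[W_tW_{t'}]|\le M_2(W)\,\gamma_2(W,|t-t'|)$ — pairing one full factor $M_2$ against one mixing factor $\gamma_2$ — sums to $\le 3N\,M_2(W)\Gamma_2(W)$. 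To carry this asymmetric pairing into $L^{r/2}$, I would regroup the increments $W_i-E[W_i\mid\mathcal G^+_j]$ along the levels of the decreasing filtration $\mathcal G^+$, turning $T_j$ into a reverse-time martingale sum whose quadratic variation is then estimated by a Cauchy--Schwarz/telescoping argument that weighs the summed coefficient $\Gamma_r(W)$ against the single ``size'' factor $M_r(W)=\gamma_r(W,0)$, thereby replacing $\Gamma_r(W)$ by $\sqrt{M_r(W)\Gamma_r(W)}$. Collecting the two contributions yields $\sum_j\|D_j\|_r^2\le C\,N\,M_r(W)\Gamma_r(W)$ and, together with the display above, the claim.
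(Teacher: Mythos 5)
Your preparatory reductions are all correct: the observations $E[W_t\mid\mathcal{G}^+_t]=0$ and $\gamma_r(W,0)=M_r(W)$ (hence $M_r(W)\le\Gamma_r(W)$), the bound $\|E[S_N\mid\mathcal{G}_0]\|_r\le\min\{\Gamma_r(W),NM_r(W)\}\le\sqrt{NM_r(W)\Gamma_r(W)}$, the Doob decomposition $D_j=E[S_N\mid\mathcal{G}_j]-E[S_N\mid\mathcal{G}_{j-1}]$, and the Burkholder--Minkowski reduction to $\sum_j\|D_j\|_r^2\le C\,N\,M_r(W)\Gamma_r(W)$. But the proof stops exactly where the lemma begins. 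The only estimate you actually establish for the future-tail terms is $\|T_j\|_r\le 2\sum_{\tau\ge 1}\gamma_r(W,\tau)\le 2\Gamma_r(W)$, which yields only $\|S_N\|_r\le C_rN^{1/2}\Gamma_r(W)$; this is genuinely weaker than the assertion, since $\Gamma_r/M_r$ is unbounded over the class of $L$-mixing processes (for a moving average of window $T$ it is of order $T$). Your final paragraph concedes that the missing estimate is ``the genuine obstacle'' and replaces a proof by a plan; the closing sentence ``Collecting the two contributions yields\dots'' asserts precisely the inequality that was never derived. So this is a genuine gap, located at the single step that separates the lemma from a routine martingale bound.

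The sketched plan also does not work as described. The reverse-time martingale structure lives across the levels of the decreasing filtration: to expose it one must telescope each $W_i$ as $\sum_s\bigl(E[W_i\mid\mathcal{G}^+_s]-E[W_i\mid\mathcal{G}^+_{s+1}]\bigr)$. Inside your $T_j$, however, every increment $W_i-E[W_i\mid\mathcal{G}^+_j]$ is pinned to the single level $s=j$ and is then composed with $E[\,\cdot\mid\mathcal{G}_j]-E[\,\cdot\mid\mathcal{G}_{j-1}]$, which destroys any martingale property relative to $(\mathcal{G}^+_s)_s$; ``turning $T_j$ into a reverse-time martingale sum'' is therefore a restatement of the difficulty, not a solution. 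What your reduction would require is a per-index square-function bound such as $\|T_j\|_r\le C\bigl(\sum_{\tau\ge1}\gamma_r^2(W,\tau)\bigr)^{1/2}$ (which would indeed give $\le C(M_r\Gamma_r)^{1/2}$, since $\gamma_r\le 2M_r$), but that inequality is of the same depth as the lemma itself and cannot be had for free. Note also that the paper does not prove Lemma \ref{inek} at all: it invokes Theorem 1.1 of \cite{laci1}. The arguments that do work, in the spirit of that reference, run in the opposite order to yours: decompose each $W_t$ along $\mathcal{G}^+$ at geometrically growing lags $2^k$; within each layer, split the time axis into residue classes modulo $2^k$, so that inside a class the terms form an honest reverse-time martingale difference sequence; apply Burkholder there to get a contribution of order $\sqrt{N2^k}\,\min\{M_r,\gamma_r(2^k)\}$ per layer; and finally sum over scales, using the quasi-monotonicity $\gamma_r(\tau+1)\le 2\gamma_r(\tau)$ (a consequence of Lemma \ref{fyffes}) together with an elementary interpolation of the type $\sum_k 2^{k/2}\min\{1,x_k\}\le C\bigl(\sum_k 2^kx_k\bigr)^{1/2}$, which is what produces the geometric mean $M_r^{1/2}\Gamma_r^{1/2}$ rather than $\Gamma_r$ and without logarithmic loss. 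This interpolation-over-scales mechanism is entirely absent from your proposal.
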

\begin{proof}
This follows from Theorem 1.1 of \cite{laci1}.
\end{proof}

\section{Proofs of ergodicity I}\label{prof1}

Throughout this section let the assumptions of Theorem \ref{lln} be valid:
$Y$ is an ergodic process; let Assumptions \ref{lyapunov} and \ref{init} be in force;
let Assumption \ref{small} hold with $R(n):=8K(n)/\lambda(n)$, $n\in\mathbb{N}$;
assume $r_1(0)+r_2(0)<\infty$ and
\begin{equation*}
\left(\frac{K(g(N))}{\lambda(g(N))}\right)^{2\delta}
\frac{\pi(N)}{N}\to 0,\ N\to\infty.
\end{equation*}

We now present a construction that is crucial for proving Theorem \ref{lln}.
The random mappings $T_t$ in the lemma below serve to provide the coupling
effects that are needed for establishing the $L$-mixing property (see Section \ref{lm} above) for
an auxiliary process ($Z$ below) which will, in turn, lead to Theorem \ref{lln}.
Such a representation with random mappings was used in \cite{bm,bw,gmmtv,msg}. In our 
setting, however, there is also dependence on $y\in\mathcal{Y}$.

For $R\geq 0$, denote by $\mathfrak{C}(R)$ the set of $\mathcal{X}\to\mathcal{X}$ mappings that are constant on 
$C(R)=\{x\in\mathcal{X}:\, V(x)\leq R\}$.

\begin{lemma}\label{t}
There exists a sequence of measurable functions 
$T_t:\mathcal{Y}\times\mathcal{X}\times{\Omega}
\to \mathcal{X}$, $t\geq 1$ such that 
\begin{equation}\label{madrid}
P(T_t(y,x,\omega)\in A)=Q(y,x,A),
\end{equation}
for all $t\geq 1$, $y\in\mathcal{Y}$,
$x\in\mathcal{X}$, $A\in\mathfrak{B}$.
For each $t\geq 1$, let $\mathcal{L}_t$ denote 
the sigma-algebra generated by the random variables 
$T_t(y,x,\cdot),\, x\in\mathcal{X},\, y\in\mathcal{Y}$. These sigma-algebras are independent. 
There are events $J_t(y)\in\mathcal{L}_{t}$, for all $t\geq 1$, $y\in\mathcal{Y}$
such that
\begin{equation}\label{patty}
J_t(y)\subset \{\omega:\, T_t(y,\cdot,\omega)\in\mathfrak{C}(R(\Vert y\Vert))\}\mbox{ and }P(J_t(y))\geq
\alpha(\Vert y\Vert).
\end{equation}
\end{lemma}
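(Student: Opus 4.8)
The plan is to construct the random mappings $T_t$ by a standard \emph{splitting} (or \emph{Nummelin splitting}) construction, adapted here to carry the dependence on the environment parameter $y$. First I would build, on an auxiliary product probability space, for each $t\geq 1$ a rich enough collection of independent uniform (or i.i.d.\ suitably distributed) randomizers. The key algebraic input is the minorization \eqref{mey}: for every $y\in\mathcal{Y}$ and every $x\in C(R(\Vert y\Vert))$ we may write
\[
Q(y,x,A)=\alpha(\Vert y\Vert)\,\nu_{\Vert y\Vert}(A)+\bigl(1-\alpha(\Vert y\Vert)\bigr)\,R_{y,x}(A),
\]
where $R_{y,x}(A):=\bigl(Q(y,x,A)-\alpha(\Vert y\Vert)\nu_{\Vert y\Vert}(A)\bigr)/\bigl(1-\alpha(\Vert y\Vert)\bigr)$ is a genuine probability measure precisely because of \eqref{mey}. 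The idea is then to realize $T_t(y,x,\cdot)$ by first flipping a ``coin'' of success probability $\alpha(\Vert y\Vert)$; on success we sample the next state from $\nu_{\Vert y\Vert}$ (a draw that does \emph{not} depend on $x$, only on $\Vert y\Vert$), and on failure we sample from the residual kernel $R_{y,x}$. The event $J_t(y)$ is exactly the success event of that coin for the given $y$.

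The execution splits into routine measurable-selection bookkeeping. First I would introduce, for each $t$, a pair of independent sources: a uniform variable $U_t$ (for the coin and for realizing $\nu$-draws) and an independent family realizing the residual kernels. Since $\mathcal{X}$ and $\mathcal{Y}$ are Polish, one can choose all the required sampling maps to be jointly measurable in $(y,x,\omega)$; I would invoke a standard functional representation of probability kernels on Polish spaces (existence of a measurable $\mathcal{X}\to\mathcal{X}$-valued map pushing a fixed reference law forward to the prescribed kernel), exactly the kind of construction cited at ``page 228 of \cite{bwbook}''. I would then \emph{define} $J_t(y):=\{U_t\le \alpha(\Vert y\Vert)\}$, which manifestly lies in $\mathcal{L}_t$ and has $P(J_t(y))=\alpha(\Vert y\Vert)\ge\alpha(\Vert y\Vert)$, giving the probability bound in \eqref{patty}. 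The independence of the sigma-algebras $\mathcal{L}_t$ across $t$ follows because the randomizers are chosen independent over $t$ by construction.

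The crucial structural point to verify is the inclusion in \eqref{patty}: on $J_t(y)$ the mapping $x\mapsto T_t(y,x,\omega)$ must be constant on $C(R(\Vert y\Vert))$, i.e.\ lie in $\mathfrak{C}(R(\Vert y\Vert))$. This is arranged by stipulating that, whenever the coin succeeds, the output is drawn from $\nu_{\Vert y\Vert}$ using the \emph{same} randomizer $U_t$ for every starting point $x$; since $\nu_{\Vert y\Vert}$ has no $x$-dependence and we reuse the identical uniform input across all $x\in C(R(\Vert y\Vert))$, the resulting value is literally a single point of $\mathcal{X}$ independent of $x$ on that set. Here the restriction $x\in C(R(\Vert y\Vert))$ matters: it is only on this small set that the residual $R_{y,x}$ is a legitimate probability, so only there do we have a clean convex decomposition to split. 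Finally I would confirm the marginal identity \eqref{madrid}: conditioning on the coin, the law of $T_t(y,x,\cdot)$ is $\alpha(\Vert y\Vert)\nu_{\Vert y\Vert}+(1-\alpha(\Vert y\Vert))R_{y,x}=Q(y,x,\cdot)$ for $x\in C(R(\Vert y\Vert))$; for $x\notin C(R(\Vert y\Vert))$ I would simply define $T_t(y,x,\cdot)$ directly from a functional representation of $Q(y,x,\cdot)$ (with the coin ignored), so \eqref{madrid} holds for \emph{all} $x\in\mathcal{X}$ as required.

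The main obstacle I anticipate is the \emph{joint} measurability in $(y,x,\omega)$ together with the uniform coupling on $C(R(\Vert y\Vert))$: one must produce the $\nu_{\Vert y\Vert}$-draw as a measurable function of $(\Vert y\Vert,U_t)$ and the residual draw as a measurable function of $(y,x,\cdot)$, while keeping everything $\mathcal{L}_t$-measurable and independent across $t$. This is where the Polish-space hypotheses on $\mathcal{X},\mathcal{Y}$ and the $\mathfrak{A}\otimes\mathfrak{B}$-measurability of $Q(\cdot,\cdot,A)$ are essential, and I would lean on the cited construction rather than reprove a general measurable-selection theorem.
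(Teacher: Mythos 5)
Your proposal is correct and is essentially the paper's own proof: the same splitting $Q(y,x,\cdot)=\alpha(\Vert y\Vert)\nu_{\Vert y\Vert}+(1-\alpha(\Vert y\Vert))R_{y,x}$ on $C(R(\Vert y\Vert))$, the same coin event $J_t(y)=\{U_t\le\alpha(\Vert y\Vert)\}$ with a shared randomizer forcing constancy of $T_t(y,\cdot,\omega)$, and the same direct sampling from $Q(y,x,\cdot)$ off the small set. The paper merely makes your ``functional representation'' step explicit, using the Borel isomorphism $\mathcal{X}\cong\mathbb{R}$ and pseudo-inverse distribution functions (over rationals, with $y$-dependence handled piecewise on the countably many shells $B_n=A_n\setminus A_{n-1}$) to secure joint measurability.
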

\begin{proof}
Let $U_n$, $n\in\mathbb{N}$ be an independent sequence of uniform random variables
on $[0,1]$. Let $\varepsilon_n$, $n\in\mathbb{N}$ be another such sequence, independent of 
$(U_n)_{n\in\mathbb{N}}$. By enlarging the probability space, if necessary, we can always construct
such random variables and we may even assume that $(U_n,\varepsilon_n)$, $n\in\mathbb{N}$
are independent of $(X_0,(Y_t)_{t\in\mathbb{Z}})$. 

We assume that $\mathcal{X}$
is uncountable, the case of countable $\mathcal{X}$ being analogous, but simpler. 
As $\mathcal{X}$ is Borel-isomorphic to $\mathbb{R}$, see page 159 of \cite{dm}, we may and will
assume that, actually, $\mathcal{X}=\mathbb{R}$ (we omit the details). 

The main idea in the arguments below is to separate the ``independent component'' $\alpha(n)\nu_n(\cdot)$
from the rest of the kernel $Q(y,x,\cdot)-\alpha(n)\nu_n(\cdot)$ for $y\in A_n$ and $x\in C(R(n))$.
This independent component will ensure the existence of the constant mappings in \eqref{patty}.

Recall the sets $A_n$, $n\in\mathbb{N}$ from Assumption \ref{lyapunov}.
Let $B_n:=A_n\setminus A_{n-1}$, $n\in\mathbb{N}$, with the convention $A_{-1}:=\emptyset$.
For each $n\in\mathbb{N}$, $y\in B_n$, let $j_n(y,r):=\nu_{n}((-\infty,r])$, $r\in\mathbb{R}$ (the cumulative distribution
function of $\nu_n$) and
define its ($\mathfrak{A}\otimes\mathcal{B}(\mathbb{R})$-measurable) pseudoinverse by 
$j^-_n(y,z):=\inf\{r\in\mathbb{Q}:\, j(y,r)\geq z\}$, $z\in\mathbb{R}$. Here $\mathcal{B}(\mathbb{R})$
refers to the Borel-field of $\mathbb{R}$.
Similarly, for $y\in B_n$ and $x\in C(R(n))$, let
$$
q(y,x,r):=\frac{Q(y,x,(-\infty,r])-\alpha(n)j_n(y,r)}{1-\alpha(n)},\ r\in\mathbb{R},
$$
the cumulative distribution function of the normalization of $Q(y,x,\cdot)-\alpha(n)\nu_n(\cdot)$.
For $x\notin C(R(n))$, set simply
$$
q(y,x,r):=Q(y,x,(-\infty,r]),\ r\in\mathbb{R}.
$$
For each $x\in\mathcal{X}$, define
$$
q^-(y,x,z):=\inf\{r\in\mathbb{Q}:\, q(y,x,r)\geq z\},\ z\in\mathbb{R}.
$$

Define, for $n\in\mathbb{N}$, $y\in B_n$,
\begin{eqnarray*}
T_t(y,x,\omega)  &:=& q^-(y,x,\varepsilon_t),\mbox{ if }U_t(\omega)>\alpha(n)
\mbox{ or }U_t(\omega)\leq\alpha(n)\mbox{ but }x\notin C(R(n)),\\
T_t(y,x,\omega)  &:=& j_n^-(y,\varepsilon_t),\mbox{ if }U_t(\omega)\leq\alpha(n)\mbox{ and }
x\in C(R(n)).
\end{eqnarray*}
Notice that $T_t(y,\cdot,\omega)\in \mathfrak{C}({R(\Vert y\Vert)})$ 
whenever $U_t(\omega)\leq\alpha(n)$, this implies \eqref{patty} with 
$J_t(y):=\{\omega:\, U_t(\omega)\leq\alpha(\Vert y\Vert)\}$.
The claimed independence of the sequence of sigma-algebras clearly holds.
It is easy to check \eqref{madrid}, too.
\end{proof}

\begin{remark}\label{kund}
{\rm Note that, in the above construction,
$(U_n,\varepsilon_n)_{n\in\mathbb{N}}$ was taken to be independent of
$(X_0,(Y_t)_{t\in\mathbb{Z}})$. This will be important later, in the proof of Theorem \ref{lln}.}
\end{remark}

We drop dependence of the mappings $T_t$ on $\omega$ in the notation from now on and
will simply write $T_t(y,x)$.
We continue our preparations for the proof of Theorem \ref{lln}.
Let $\mathcal{G}_t:=\sigma(\varepsilon_i,U_i,\ i\leq t)$ and
$\mathcal{G}^+_t:=\sigma(\varepsilon_i,U_i,\ i\geq t+1)$, $t\in\mathbb{N}$. Take an arbitrary
element $\tilde{x}\in\mathcal{X}$, this will remain fixed throughout this section.

Our approach to the ergodic theorem for $X$ does not rely on the Markovian structure, it proceeds 
rather through establishing a convenient mixing property.
The ensuing arguments will lead to Theorem \ref{lln} via the $L$-mixing
property of certain auxiliary Markov chains. It turns out that $L$-mixing is particularly
well-adapted to Markov chains, even when they are inhomogeneous (and for us
this is the crucial point). The main ideas of the arguments below go back to 
\cite{bm}, \cite{bw}, \cite{gmmtv} and \cite{msg}. In \cite{gmmtv} and \cite{msg}, Doeblin chains were treated. We 
need to extend those arguments substantially in the present, more complicated setting.

Let us fix $\mathbf{y}=(y_j)_{j\in\mathbb{Z}}\in\mathfrak{Y}$ till further notice such that,
for some $H\in\mathbb{N}$, $\Vert y_j\Vert\leq H$ holds for all $j\in\mathbb{Z}$.
Define $Z_0:=X_0$, $Z_{t+1}:=T_{t+1}({y}_t,Z_t)$, $t\in\mathbb{N}$.
Clearly, the process $Z$ heavily depends on the choice of $\mathbf{y}$. However, for a while
we do not signal this dependence for notational simplicity. Fix also $m\in\mathbb{N}$ till further notice.
Define $\tilde{Z}_m:=\tilde{x}$, $\tilde{Z}_{t+1}:=T_{t+1}({y}_t,\tilde{Z}_t)$, $t\geq m$. Notice that $\tilde{Z}_t$, $t\geq m$ are $\mathcal{G}^+_m$-measurable.

Our purpose will be to prove that, with a large probability,
$Z_{m+\tau}=\tilde{Z}_{m+\tau}$ for $\tau$ large enough. In other
words, a coupling between the processes $Z$ and $\tilde{Z}$ is
realized. Fix $\epsilon>0$ which will be specified later.
Let $\tau\geq 1$ be an arbitrary integer. 
Denote $\vartheta:=\lceil\, \epsilon\tau\, \rceil$.
Recall that $R(H)=8K(H)/\lambda(H)$. 
Define $D:=C(R(H)/2)=\{x\in\mathcal{X}:\, V(x)\leq R(H)/2\}$ and
$\overline{D}:=\{(x_1,x_2)\in\mathcal{X}^2:\, V(x_1)+V(x_2)\leq R(H)\}$.

Now let us notice that if $z\in\mathcal{X}\setminus D$, then 
for all $y\in A_H$,
\begin{eqnarray}\nonumber
[Q(y)(K(H)+V)](z)&\leq& (1-\lambda(H))V(z)+2K(H)\\
&\leq& (1-\lambda(H)/2)V(z).
\label{hsg}
\end{eqnarray}

Denote $\overline{Z}_t:=(Z_t,\tilde{Z}_t)$, $t\geq m$.
Define the $(\mathcal{G}_t)_{t\in\mathbb{N}}$-stopping times
$$
\sigma_0:=m,\ \sigma_{n+1}:=\min\{i>\sigma_n:\ \overline{Z}_i\in \overline{D}\}.
$$

\begin{lemma}\label{bobo}
We have $\sup_{k\in\mathbb{N}}E[V(Z_k)]\leq E[V(X_0)]+K(H)/\lambda(H)<\infty$. 
Furthermore, $\sup_{k\geq m}E[V(\tilde{Z}_k)]\leq V(\tilde{x})+K(H)/\lambda(H)$.
\end{lemma}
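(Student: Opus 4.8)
The plan is to bound the expected value of $V(Z_k)$ by iterating the drift condition from Assumption~\ref{lyapunov}. Since $\mathbf{y}=(y_j)_{j\in\mathbb{Z}}$ is fixed with $\Vert y_j\Vert\leq H$ for all $j$, and $\lambda(\cdot)$ is non-increasing while $K(\cdot)$ is non-decreasing, I would first record that $\lambda(\Vert y_t\Vert)\geq\lambda(H)$ and $K(\Vert y_t\Vert)\leq K(H)$ for every $t$. The construction in Lemma~\ref{t} guarantees via \eqref{madrid} that $T_{t+1}(y_t,x,\cdot)$ has law $Q(y_t,x,\cdot)$, so taking conditional expectation over the fresh randomness $(\varepsilon_{t+1},U_{t+1})$ (which is independent of $\mathcal{G}_t$, hence of $Z_t$) yields the one-step bound
\[
E[V(Z_{t+1})\mid \mathcal{G}_t]=[Q(y_t)V](Z_t)\leq(1-\lambda(H))V(Z_t)+K(H).
\]
Taking full expectations gives the scalar recursion $a_{t+1}\leq(1-\lambda(H))a_t+K(H)$ for $a_t:=E[V(Z_t)]$.

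Next I would solve this linear recursion. Unrolling it from $a_0=E[V(X_0)]$ produces
\[
a_k\leq(1-\lambda(H))^k E[V(X_0)]+K(H)\sum_{i=0}^{k-1}(1-\lambda(H))^i\leq E[V(X_0)]+\frac{K(H)}{\lambda(H)},
\]
where I used $0<\lambda(H)\leq 1$ so that $(1-\lambda(H))^k\leq 1$ and the geometric sum is bounded by $1/\lambda(H)$. Since this bound is uniform in $k$, taking the supremum yields $\sup_{k\in\mathbb{N}}E[V(Z_k)]\leq E[V(X_0)]+K(H)/\lambda(H)$. Finiteness follows because $E[V(X_0)]<\infty$ under Assumption~\ref{init} (indeed even $E[V^2(X_0)]<\infty$), and $K(H)/\lambda(H)<\infty$ for the fixed $H$.

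For the second claim the argument is essentially identical but starts at time $m$ from the deterministic point $\tilde{Z}_m=\tilde{x}$, so $E[V(\tilde{Z}_m)]=V(\tilde{x})$. Applying the same one-step drift inequality to the recursion $\tilde{Z}_{t+1}=T_{t+1}(y_t,\tilde{Z}_t)$ for $t\geq m$ and iterating exactly as above gives $\sup_{k\geq m}E[V(\tilde{Z}_k)]\leq V(\tilde{x})+K(H)/\lambda(H)$. The only mild subtlety worth flagging is the measurability and integrability needed to justify taking conditional expectations step by step and to ensure $a_t<\infty$ at each stage; this is handled by the fact that $V$ is non-negative and measurable together with the drift bound, which keeps every $a_t$ finite inductively. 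I do not expect any genuine obstacle here—the statement is a routine consequence of the drift condition specialized to the bounded environment, and the main care is simply to invoke the independence of the coupling randomness from the past so that the conditional expectation collapses to $[Q(y_t)V](Z_t)$.
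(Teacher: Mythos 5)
Your proof is correct and follows essentially the same route as the paper: iterate the drift condition with the uniform bounds $\lambda(\Vert y_t\Vert)\geq\lambda(H)$, $K(\Vert y_t\Vert)\leq K(H)$, then sum the resulting geometric series. The only difference is that you spell out the conditioning step (via the independence of the coupling randomness from $\mathcal{G}_t$) that the paper leaves implicit, which is a harmless elaboration.
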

\begin{proof} Assumption \ref{lyapunov} easily implies that, for $k\geq 1$,
$$
E[V(Z_k)]\leq (1-\lambda(H))E[V(Z_{k-1})]+K(H).
$$
Assumption \ref{init} implies that $E[V(X_0)]=E[V(Z_0)]<\infty$ so, for every $k\in\mathbb{N}$,
\begin{equation*}
E[V(Z_k)]\leq E[V(X_0)]+\sum_{l=0}^{\infty} K(H)(1-\lambda(H))^l=E[V(X_0)]+\frac{K(H)}{\lambda(H)}.
\end{equation*}
Similarly, 
$$
E[V(\tilde{Z}_k)]\leq V(\tilde{x})+\sum_{l=0}^{\infty} K(H)(1-\lambda(H))^l= V(\tilde{x})+\frac{K(H)}{\lambda(H)}.
$$
\end{proof}

The counterpart of the above lemma for $X$ (driven by $Y$, which is stochastic) instead of $Z$ is the following.

\begin{lemma}\label{evx}
$$
\sup_{n\in\mathbb{N}}E[V(X_n)]<\infty.
$$
\end{lemma}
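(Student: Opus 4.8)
The plan is to contrast this statement with Lemma \ref{bobo} and see exactly why the elementary argument there does \emph{not} transfer. In Lemma \ref{bobo} the environment was frozen at a fixed $\mathbf{y}$ with $\Vert y_j\Vert\le H$, so the drift \eqref{lyapi} held with the \emph{constant} coefficients $\lambda(H)$, $K(H)$ and one could sum a genuine geometric series. Here $X$ is driven by the random $Y$, and the natural recursion $E[V(X_{t+1})\mid\mathcal{F}_t]\le(1-\lambda(\Vert Y_t\Vert))V(X_t)+K(\Vert Y_t\Vert)$ involves the \emph{random} coefficients $\lambda(\Vert Y_t\Vert)$ and $K(\Vert Y_t\Vert)$: the contraction can be arbitrarily weak and the additive term arbitrarily large on excursions of $\Vert Y\Vert$, so there is no uniform geometric bound to exploit. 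This is the main obstacle, and it tells me that a self-contained drift iteration is the wrong tool; instead I would piggyback on the convergence already established in Theorem \ref{limit}.

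First I would check that Theorem \ref{limit} is indeed available under the standing assumptions of this section. The only discrepancy is that Assumption \ref{small} is imposed here with $R(n):=8K(n)/\lambda(n)$ rather than $R(n):=4K(n)/\lambda(n)$. But $C(4K(n)/\lambda(n))\subseteq C(8K(n)/\lambda(n))$, so the minorization \eqref{mey} on the larger small set implies \eqref{mey} on the smaller one, with the very same $\alpha(\cdot)$ and $\nu_n$; hence the original Assumption \ref{small} holds and the quantities $r_1,r_2$ are unchanged. Together with $r_1(0)+r_2(0)<\infty$ and Assumption \ref{init} (which in particular gives $E[V(X_0)]<\infty$, so $\mu_0\in\mathcal{P}_V(\mathcal{X})$), Theorem \ref{limit} applies and yields a limit $\mu_*\in\mathcal{P}_V(\mathcal{X})$ with $\rho_1(\mu_t,\mu_*)\le C[r_1(t)+r_2(t)]$ for all $t$.

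Finally I would convert this into the desired uniform moment bound. Recalling $\mu_n=\mathrm{Law}(X_n)$, so that $E[V(X_n)]=\int_{\mathcal{X}}V(z)\,\mu_n(dz)$, and using that $\rho_1$ dominates the $V$-mass discrepancy,
\[
\left|\int_{\mathcal{X}}V\,d\mu_n-\int_{\mathcal{X}}V\,d\mu_*\right|\le\int_{\mathcal{X}}V(z)\,|\mu_n-\mu_*|(dz)\le\rho_1(\mu_n,\mu_*),
\]
I would conclude
\[
E[V(X_n)]\le\int_{\mathcal{X}}V\,d\mu_*+\rho_1(\mu_n,\mu_*)\le\int_{\mathcal{X}}V\,d\mu_*+C[r_1(0)+r_2(0)],
\]
where in the last step I use that $r_1(\cdot),r_2(\cdot)$ are tails of convergent series, hence non-increasing with $r_i(n)\le r_i(0)$. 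Since the right-hand side is finite and independent of $n$, taking the supremum over $n\in\mathbb{N}$ proves the claim. (Equivalently, one could avoid referring to $\mu_*$ altogether and telescope: $\int V\,d\mu_n\le E[V(X_0)]+\sum_{k\ge 0}\rho_1(\mu_k,\mu_{k+1})$, the series being finite by the estimate \eqref{morkonn} obtained in the proof of Theorem \ref{limit}.) I expect no serious difficulty beyond the bookkeeping check that Theorem \ref{limit} is applicable with the strengthened small set.
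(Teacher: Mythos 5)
Your proof is correct and takes essentially the same route as the paper: both deduce the uniform bound from the $\rho_1$-convergence established in Theorem \ref{limit}, the only cosmetic difference being that you compare $\mu_n$ with $\mu_*$ (using monotonicity of the tails $r_1,r_2$) while the paper compares $\mu_n$ with $\mu_0$ and uses that $\rho_1(\mu_n,\mu_0)\to\rho_1(\mu_*,\mu_0)$. Your explicit verification that the strengthened small-set condition ($R(n)=8K(n)/\lambda(n)$) implies the original one is a worthwhile bookkeeping step that the paper leaves implicit.
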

\begin{proof}
Note that $E[V(X_0)]<\infty$ by Assumption \ref{init}. So, for each $n\geq 1$,
\begin{eqnarray*}
E[V(X_n)]\leq \int_{\mathcal{X}}(1+V(z))\mu_n(dz) &\leq&\\ 
\int_{\mathcal{X}}(1+V(z))|\mu_n-\mu_0|(dz) +
\int_{\mathcal{X}}(1+V(z))\mu_0(dz) &=&\\
\rho_1(\mu_n,\mu_0)+ E[V(X_0)]+1.
\end{eqnarray*}
As $\rho_1(\mu_n,\mu_0)\to \rho_1(\mu_*,\mu_0)$ by Theorem \ref{limit}, the
statement follows.
\end{proof}

The results below serve to control the number of returns to $\overline{D}$ and the probability of coupling between the processes $Z$ and $\tilde{Z}$. Our estimation strategy in the proof of Theorem \ref{lln} will be the following. We will control $P(\tilde{Z}_{\tau+m}\neq Z_{\tau+m})$ for large $\tau$: either there were only few returns of the process $\overline{Z}$ to $\overline{D}$ (which happens with small probability) or there were
many returns but coupling did not occur (which also has small probability).
First let us present a lemma controlling the number of returns to 
$\overline{D}$.

\begin{lemma}\label{standage} There is 
$\bar{C}>0$ such that
$$
\sup_{n\geq 1} E\left[\exp(\varrho(H)(\sigma_{n+1}-\sigma_n))\big\vert
\mathcal{G}_{\sigma_n}\right]\leq \frac{\bar{C}}{\lambda^2(H)},
$$
and 
$$
E[\exp(\varrho(H)(\sigma_1-\sigma_0))]\leq \frac{\bar{C}}{\lambda^2(H)}
$$
where
$\varrho(H):=\ln(1+\lambda(H)/2)$. 
In particular, $\sigma_n<\infty$ a.s. for each $n\in\mathbb{N}$. Furthermore,
$\bar{C}$ does not depend on either $\mathbf{y}$, $m$ or $H$.
\end{lemma}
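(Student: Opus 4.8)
The plan is to reduce everything to one uniform excursion estimate for the pair process and to feed it through the (strong) Markov structure of $\overline{Z}$. Since $\overline{Z}$ is driven by the i.i.d.\ innovations $(\varepsilon_i,U_i)$, which are independent of $X_0$, it suffices to bound the exponential moment of a first return to $\overline{D}$ uniformly over deterministic starting points in $\overline{D}$; the bound conditional on $\mathcal{G}_{\sigma_n}$ for $n\ge1$ then follows, as $\overline{Z}_{\sigma_n}\in\overline{D}$ by construction (measurability of $\overline{Z}_{\sigma_n}$ is handled by augmenting $\mathcal{G}_t$ with $\sigma(X_0)$, which is harmless thanks to this uniformity). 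Working with the additive Lyapunov function $\overline{V}(x_1,x_2):=V(x_1)+V(x_2)$ and using that marginally $T_{t+1}(y_t,x)$ has law $Q(y_t,x,\cdot)$ together with $\lambda(\Vert y_j\Vert)\ge\lambda(H)$, $K(\Vert y_j\Vert)\le K(H)$, Assumption \ref{lyapunov} applied to each coordinate gives
\[
E[\overline{V}(\overline{Z}_{t+1})\mid\mathcal{G}_t]\le(1-\lambda(H))\overline{V}(\overline{Z}_t)+2K(H).
\]
On $\{\overline{Z}_t\notin\overline{D}\}$ one has $\overline{V}(\overline{Z}_t)>R(H)=8K(H)/\lambda(H)$, so $2K(H)<\lambda(H)\overline{V}(\overline{Z}_t)/4$ and the drift sharpens to the geometric form $E[\overline{V}(\overline{Z}_{t+1})\mid\mathcal{G}_t]\le\kappa\,\overline{V}(\overline{Z}_t)$ with $\kappa:=1-\tfrac34\lambda(H)$.

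I would then run a supermartingale argument at the critical rate $\varrho^\ast:=-\ln\kappa$, so that $e^{\varrho^\ast}\kappa=1$. Fix $n\ge1$, write $\tau:=\sigma_{n+1}-\sigma_n$, and set $N_s:=e^{\varrho^\ast(s\wedge\tau)}\overline{V}(\overline{Z}_{\sigma_n+s\wedge\tau})$ for $s\ge1$. On $\{s<\tau\}$ the state $\overline{Z}_{\sigma_n+s}$ lies outside $\overline{D}$, the geometric drift applies, and the identity $e^{\varrho^\ast}\kappa=1$ cancels the exponential factor, so $(N_s)_{s\ge1}$ is a supermartingale. Since $\overline{V}(\overline{Z}_{\sigma_n})\le R(H)$, the one-step drift gives $E[N_1\mid\mathcal{G}_{\sigma_n}]=e^{\varrho^\ast}E[\overline{V}(\overline{Z}_{\sigma_n+1})\mid\mathcal{G}_{\sigma_n}]\le e^{\varrho^\ast}\bigl((1-\lambda(H))R(H)+2K(H)\bigr)=e^{\varrho^\ast}\kappa R(H)=R(H)$, while on $\{\tau>s\}$ one has $N_s\ge R(H)e^{\varrho^\ast s}$; optional stopping therefore yields
\[
P(\tau>s\mid\mathcal{G}_{\sigma_n})\le e^{-\varrho^\ast s},\qquad s\ge1.
\]
Summing the tail gives
\[
E[e^{\varrho(H)\tau}\mid\mathcal{G}_{\sigma_n}]\le\sum_{s\ge1}e^{\varrho(H)s}P(\tau>s-1\mid\mathcal{G}_{\sigma_n})\le e^{\varrho^\ast}\sum_{s\ge1}e^{(\varrho(H)-\varrho^\ast)s},
\]
which is exactly where the prescribed exponent enters: with $\varrho(H)=\ln(1+\tfrac{\lambda(H)}{2})$ one has $e^{\varrho(H)-\varrho^\ast}=(1+\tfrac{\lambda(H)}{2})(1-\tfrac34\lambda(H))=1-\tfrac{\lambda(H)}{4}-\tfrac38\lambda^2(H)<1$, so the series converges with sum below $(\tfrac{\lambda(H)}{4})^{-1}=4/\lambda(H)$, while $e^{\varrho^\ast}=(1-\tfrac34\lambda(H))^{-1}\le4/3$ because $\lambda(H)\le1/3$. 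This gives $E[e^{\varrho(H)\tau}\mid\mathcal{G}_{\sigma_n}]\le\bar{C}/\lambda(H)\le\bar{C}/\lambda^2(H)$ for $n\ge1$, with $\bar{C}$ absolute.

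For the second inequality the only change is that $\sigma_0=m$ need not belong to $\overline{D}$, so $\overline{V}(\overline{Z}_m)$ is no longer bounded by $R(H)$. I would replace the bound on the prefactor by taking unconditional expectations and invoking Lemma \ref{bobo}: $E[\overline{V}(\overline{Z}_{m+1})]\le(1-\lambda(H))\bigl(E[V(Z_m)]+V(\tilde{x})\bigr)+2K(H)$ with $E[V(Z_m)]\le E[V(X_0)]+K(H)/\lambda(H)$. Dividing by $R(H)=8K(H)/\lambda(H)$, the $K(H)/\lambda(H)$ contributions give an absolute constant while the $E[V(X_0)]+V(\tilde{x})$ contributions are multiplied by $\lambda(H)/K(H)\le1$, so the prefactor stays bounded by a constant depending only on the fixed data $E[V(X_0)]$ and $\tilde{x}$ (hence on none of $\mathbf{y},m,H$); the summation step is identical and again produces $\bar{C}/\lambda^2(H)$. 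Finiteness of the exponential moments forces $\tau<\infty$ a.s., so $\sigma_n<\infty$ a.s.\ for every $n$ by induction. The main obstacle is precisely the compatibility of the excursion rate $\kappa=1-\tfrac34\lambda(H)$ with the prescribed exponent $\varrho(H)=\ln(1+\tfrac{\lambda(H)}{2})$—one must verify the strict inequality $e^{\varrho(H)}\kappa<1$ to make summation at rate $\varrho(H)$ admissible—together with the separate treatment of the first excursion, where the coupled chain may start outside $\overline{D}$.
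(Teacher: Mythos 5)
Your proof is correct, and while it rests on the same two pillars as the paper's argument---the Foster--Lyapunov drift sharpens to a geometric contraction while the coupled chain is outside $\overline{D}$, and the resulting geometric tail for $\sigma_{n+1}-\sigma_n$ is summed against $e^{\varrho(H)k}$---the execution differs in two genuine respects. First, you contract the joint function $\overline{V}(x_1,x_2)=V(x_1)+V(x_2)$ directly on the complement of $\overline{D}$, where $\overline{V}>R(H)=8K(H)/\lambda(H)$, obtaining the factor $\kappa=1-\tfrac34\lambda(H)$; the paper instead uses the coordinatewise observation \eqref{hsg} (outside $\overline{D}$ at least one coordinate lies outside $D=C(R(H)/2)$), which yields the weaker factor $1-\lambda(H)/2$. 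Second, you convert the drift into a tail bound via an exponential supermartingale run at the critical rate $\varrho^*=-\ln\kappa$ together with optional stopping, whereas the paper unrolls the same estimate by hand, iterating conditional expectations against products of indicators $1_{\{\overline{Z}_i\notin\overline{D}\}}$. Your route buys a strictly sharper conclusion: since $e^{\varrho(H)}\kappa=1-\tfrac{\lambda(H)}{4}-\tfrac38\lambda^2(H)$ stays a distance of order $\lambda(H)$ below $1$, you get $E[e^{\varrho(H)(\sigma_{n+1}-\sigma_n)}\mid\mathcal{G}_{\sigma_n}]\le \bar C/\lambda(H)$, whereas the paper's choice $e^{\varrho(H)}(1-\lambda(H)/2)=1-\lambda^2(H)/4$ leaves only a gap of order $\lambda^2(H)$ and hence the weaker (but sufficient) bound $\bar C/\lambda^2(H)$; conversely, the paper's iteration is more elementary in that it needs no optional stopping theorem. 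Your treatment of the first excursion (unconditional expectation plus Lemma \ref{bobo}, with a prefactor depending only on the fixed data $E[V(X_0)]$ and $V(\tilde x)$, hence on none of $\mathbf{y}$, $m$, $H$) matches the paper's, and your remark about augmenting $\mathcal{G}_t$ by $\sigma(X_0)$ addresses a measurability point the paper leaves implicit---harmless indeed, since $X_0$ is independent of $(\varepsilon_i,U_i)_{i\in\mathbb{N}}$, so the augmented $\mathcal{G}_t$ remains independent of $\mathcal{G}^+_t$.
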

\begin{proof}  
We can estimate, for $k\geq 1$ and $n\geq 1$, 
\begin{eqnarray*}
P(\sigma_{n+1}-\sigma_n> k\vert\mathcal{G}_{\sigma_n})= P(\overline{Z}_{\sigma_n+k}\notin \overline{D},\ldots, \overline{Z}_{\sigma_n+1}
\notin\overline{D}\vert\mathcal{G}_{\sigma_n}) &\leq&\\
E\left[\left(\frac{V(Z_{\sigma_n+k})+V(\tilde{Z}_{\sigma_n+k})}{R(H)}\right)1_{\{\overline{Z}_{\sigma_n+k-1}\notin \overline{D}\}}\cdots
1_{\{\overline{Z}_{\sigma_n+1}\notin \overline{D}\}}\vert\mathcal{G}_{\sigma_n}\right] &=&\\
E\left[E\left[\left(\frac{V(Z_{\sigma_n+k})+V(\tilde{Z}_{\sigma_n+k})}{R(H)}\right)1_{\{\overline{Z}_{\sigma_n+k-1}\notin \overline{D}\}}
|\mathcal{G}_{\sigma_n+k-1}\right]1_{\{\overline{Z}_{\sigma_n+k-2}\notin \overline{D}\}}\right. &\cdots& \\
\left. \cdots 1_{\{\overline{Z}_{\sigma_n+1}\notin \overline{D}\}}\vert\mathcal{G}_{\sigma_n}\right]. & &
\end{eqnarray*}

Notice that, on $\{\overline{Z}_{\sigma_n+k-1}\notin \overline{D}\}$, either $Z_{\sigma_n+k-1}$
or $\tilde{Z}_{\sigma_n+k-1}$ falls outside $D$. Let us assume that $Z_{\sigma_n+k-1}$
does so, i.e.\ the estimation below is meant to take place on the set $\{Z_{\sigma_n+k-1}\notin D\}$. The other case can be treated analogously. Assumption \ref{lyapunov} and the 
observation \eqref{hsg} imply that
\begin{eqnarray*}
E\left[\left(\frac{V(Z_{\sigma_n+k})+V(\tilde{Z}_{\sigma_n+k})}{R(H)}\right)1_{\{\overline{Z}_{\sigma_n+k-1}\notin \overline{D}\}}|\mathcal{G}_{\sigma_n+k-1}\right] 
&\leq&\\
\frac{1}{R(H)}[(1-\lambda(H)/2) V(Z_{\sigma_n+k-1})-K(H)] &+&\\
\frac{1}{R(H)}[(1-\lambda(H))V(\tilde{Z}_{\sigma_n+k-1})+K(H)]
&\leq&\\ \frac{1-\lambda(H)/2}{R(H)} [V(Z_{\sigma_n+k-1})+V(\tilde{Z}_{\sigma_n+k-1})]. & &
\end{eqnarray*}
This argument can clearly be iterated and leads to
\begin{eqnarray*}
P(\sigma_{n+1}-\sigma_n> k\vert\mathcal{G}_{\sigma_n})&\leq&\\
\frac{(1-\lambda(H)/2)^{k-1}}{R(H)} E\left[V(Z_{\sigma_n+1})+V(\tilde{Z}_{\sigma_n+1})\Big\vert
\mathcal{G}_{\sigma_n}\right] &\leq&\\
\frac{(1-\lambda(H)/2)^{k-1}}{R(H)} \left[ (1-\lambda(H))\left[V(Z_{\sigma_n})+V(\tilde{Z}_{\sigma_n})\right]+2K(H)\right] &\leq &\\
\leq (1-\lambda(H)/2)^{k},  
\end{eqnarray*}
by Assumption \ref{lyapunov}, since $\overline{Z}_{\sigma_n}\in \overline{D}$. 
In the case $n=0$, we arrive at 
\begin{eqnarray*}
P(\sigma_{1}-\sigma_0> k)&\leq&\\
 E\left[(1-\lambda(H))(V(Z_m)+V(\tilde{x}))+2K(H)\right]
\frac{(1-\lambda(H)/2)^{k-1}}{R(H)} &\leq&\\
\left(E[V(X_0)]+\frac{1}{8}+V(\tilde{x})+\frac{\lambda(H)}{4}\right)
\left(1-\frac{\lambda(H)}{2}\right)^{k-1} & & 
\end{eqnarray*}
instead, in a similar way, by Lemma \ref{bobo}. 

Now we turn from probabilities to expectations.
Using $e^{\varrho(H)}\leq 2$,
we can estimate, for $n\geq 1$,
\begin{eqnarray*}
E\left[\exp\{\varrho(H)(\sigma_{n+1}-\sigma_n)\}\big\vert
\mathcal{G}_{\sigma_n}\right] &\leq&\\
\sum_{k=0}^{\infty} e^{\varrho(H)(k+1)}\left(1-\frac{\lambda(H)}{2}\right)^{k} &\leq&\\
2\sum_{k=0}^{\infty} \left(1-\frac{\lambda^2(H)}{4}\right)^{k} &=& \frac{8}{\lambda^2(H)}. 
\end{eqnarray*}

When $n=0$, we obtain 
\begin{eqnarray*}
E\left[\exp\{\varrho(H)(\sigma_{1}-\sigma_0)\}\right] &\leq&\\
\left(E[V(X_0)]+\frac{1}{8}+V(\tilde{x})+\frac{\lambda(H)}{4}\right)\left[
e^{\varrho(H)}+
\sum_{k=1}^{\infty} e^{\varrho(H)(k+1)}\left(1-\frac{\lambda(H)}{2}\right)^{k-1}\right] &\leq&\\
\frac{\bar{C}}{\lambda^2(H)}, & & 
\end{eqnarray*}
for some $\bar{C}\geq 8$. The statement follows.
\end{proof}

Now we make the choice 
$$
\epsilon:=\epsilon(H)=\varrho(H)/4(\ln(\bar{C})-2\ln(\lambda(H))).
$$

\begin{corollary}\label{dargay} If 
\begin{equation}\label{thor}
\tau\geq 1/\epsilon(H),
\end{equation}
then
\[
P(\sigma_{\vartheta}>m+\tau)\leq \exp(-\varrho(H)\tau/2).
\]
\end{corollary}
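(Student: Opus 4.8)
The plan is to reduce the event $\{\sigma_{\vartheta}>m+\tau\}$ to a large-deviation estimate for the sum of the inter-arrival times, and then combine a Chernoff bound with the exponential moment estimates of Lemma \ref{standage}. Since $\sigma_0=m$ and the $\sigma_n$ are a.s.\ finite, I would write $\sigma_{\vartheta}-m=\sum_{n=0}^{\vartheta-1}(\sigma_{n+1}-\sigma_n)$ and apply Markov's inequality to the exponential moment at rate $\varrho(H)$:
\[
P(\sigma_{\vartheta}>m+\tau)=P\left(\sum_{n=0}^{\vartheta-1}(\sigma_{n+1}-\sigma_n)>\tau\right)\leq e^{-\varrho(H)\tau}\,E\left[\exp\left(\varrho(H)\sum_{n=0}^{\vartheta-1}(\sigma_{n+1}-\sigma_n)\right)\right].
\]

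Next I would control the exponential moment by peeling off the factors one at a time. The factors indexed by $n=0,\dots,\vartheta-2$ are $\mathcal{G}_{\sigma_{\vartheta-1}}$-measurable, so conditioning on $\mathcal{G}_{\sigma_{\vartheta-1}}$ and invoking the conditional bound of Lemma \ref{standage} (valid for the index $n=\vartheta-1\geq 1$) shows that the innermost factor contributes at most $\bar{C}/\lambda^2(H)$. Iterating the tower property down to $n=1$, and finally applying the \emph{unconditional} bound of Lemma \ref{standage} for the $n=0$ increment (which admits only that estimate), yields
\[
E\left[\exp\left(\varrho(H)\sum_{n=0}^{\vartheta-1}(\sigma_{n+1}-\sigma_n)\right)\right]\leq \left(\frac{\bar{C}}{\lambda^2(H)}\right)^{\vartheta},
\]
so that $P(\sigma_{\vartheta}>m+\tau)\leq \exp(-\varrho(H)\tau)\,(\bar{C}/\lambda^2(H))^{\vartheta}$.

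It remains to verify that the prefactor is absorbed by the chosen $\epsilon(H)$. Taking logarithms and using $\vartheta=\lceil\epsilon(H)\tau\rceil\leq\epsilon(H)\tau+1$, together with $\ln(\bar{C}/\lambda^2(H))=\ln(\bar{C})-2\ln(\lambda(H))$ and the definition $\epsilon(H)=\varrho(H)/[4(\ln(\bar{C})-2\ln(\lambda(H)))]$, I would obtain
\[
\vartheta\,\ln\frac{\bar{C}}{\lambda^2(H)}\leq \frac{\varrho(H)\tau}{4}+\frac{\varrho(H)}{4\epsilon(H)}.
\]
Hence the desired bound $\exp(-\varrho(H)\tau/2)$ follows as soon as $\varrho(H)/(4\epsilon(H))\leq \varrho(H)\tau/4$, i.e.\ as soon as $\tau\geq 1/\epsilon(H)$, which is exactly hypothesis \eqref{thor}.

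The argument is essentially routine once the exponential moment bounds are in hand. The only point requiring genuine care is the bookkeeping in the peeling step: keeping track of which increments are measurable with respect to $\mathcal{G}_{\sigma_n}$, and correctly separating the $n=0$ increment (which only admits the unconditional estimate) from the increments with $n\geq 1$ (which admit the conditional one). The tuning of $\epsilon(H)$ is arranged precisely so that the extra $+1$ coming from the ceiling is swallowed under the assumption $\tau\geq 1/\epsilon(H)$; this interplay is what I expect to be the delicate part, rather than any analytic difficulty.
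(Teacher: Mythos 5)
Your proposal is correct and follows essentially the same route as the paper: a Chernoff/Markov bound on $\exp(\varrho(H)\sigma_{\vartheta})$, the tower-property peeling of the increments $\sigma_{n+1}-\sigma_n$ using the conditional bound of Lemma \ref{standage} for $n\geq 1$ and the unconditional one for $n=0$, and the same bookkeeping with $\vartheta\leq\epsilon(H)\tau+1$ and the definition of $\epsilon(H)$ to absorb the prefactor under hypothesis \eqref{thor}. The paper compresses the peeling step into the phrase ``Lemma \ref{standage} and the tower rule easily imply,'' and your write-up simply supplies those details explicitly.
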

\begin{proof} Lemma \ref{standage} and the tower rule for conditional
expectations easily imply
$$
E[\exp(\varrho(H)\sigma_{\vartheta})]\leq \left(\frac{\bar{C}}{\lambda^2(H)}\right)^{\vartheta}e^{\varrho(H)m}.
$$
Hence, by the Markov inequality,
$$
P(\sigma_{\vartheta}>m+\tau)\leq \left(\frac{\bar{C}}{\lambda^2(H)}\right)^{\vartheta}\exp(-\varrho(H)\tau).
$$
The statement now follows by direct calculations. Indeed, this choice of $\epsilon(H)$ and $\tau\geq 1/\epsilon(H)$ imply
$$
(\ln(\bar{C})-2\ln(\lambda(H)))[\epsilon(H)\tau+1]\leq \frac{\tau}{2}\ln(1+\lambda(H)/2),
$$
which guarantees
\begin{equation*}
(\ln(\bar{C})-2\ln(\lambda(H))\lceil\epsilon(H)\tau\rceil-\tau\ln(1+\lambda(H)/2)\leq -\frac{\tau}{2}\ln(1+\lambda(H)/2).
\end{equation*}
\end{proof}

The next lemma controls the probability of coupling between $Z$ and $\tilde{Z}$.
\begin{lemma}\label{fonay} 
$$
P(Z_{m+\tau}\neq \tilde{Z}_{m+\tau},\ \sigma_{\vartheta}\leq m+\tau)\leq (1-\alpha(H))^{\vartheta-1}\leq e^{-(\vartheta-1)\alpha(H)}.
$$
\end{lemma}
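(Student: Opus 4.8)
The plan is to exploit that $Z$ and $\tilde Z$ are driven by the \emph{same} random maps $T_{t+1}(y_t,\cdot)$, so that coupling, once achieved, is permanent, and then to extract an independent coupling chance of size at least $\alpha(H)$ at each of the returns $\sigma_1,\dots,\sigma_{\vartheta-1}$.

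First I would record the \textbf{permanence of coupling}: if $Z_s=\tilde Z_s$ for some $s\ge m$, then $Z_{s+1}=T_{s+1}(y_s,Z_s)=T_{s+1}(y_s,\tilde Z_s)=\tilde Z_{s+1}$, and inductively $Z_t=\tilde Z_t$ for all $t\ge s$. Hence $\{Z_{m+\tau}\ne\tilde Z_{m+\tau}\}$ is exactly the event that the two paths never meet on $[m,m+\tau]$. On $\{\sigma_\vartheta\le m+\tau\}$ the returns satisfy $\sigma_1<\dots<\sigma_\vartheta\le m+\tau$, so each post-return instant $\sigma_k+1$ (for $1\le k\le\vartheta-1$) obeys $\sigma_k+1\le\sigma_{k+1}\le m+\tau$ and thus lies inside the horizon. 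Consequently the target event is contained in $\bigcap_{k=1}^{\vartheta-1}F_k$, where $F_k:=\{Z_{\sigma_k+1}\ne\tilde Z_{\sigma_k+1}\}$ denotes failure of the $k$-th coupling attempt.

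Second, for the \textbf{per-attempt lower bound} I would invoke Lemma \ref{t}. At a return time $\sigma_k$ we have $\overline Z_{\sigma_k}\in\overline D$, so both $Z_{\sigma_k}$ and $\tilde Z_{\sigma_k}$ lie in $C(R(H))$; using $\|y_{\sigma_k}\|\le H$ together with the construction of Lemma \ref{t}, there is an event $J_{\sigma_k+1}(y_{\sigma_k})\in\mathcal L_{\sigma_k+1}$ with $P(J_{\sigma_k+1}(y_{\sigma_k}))\ge\alpha(\|y_{\sigma_k}\|)\ge\alpha(H)$ on which $T_{\sigma_k+1}(y_{\sigma_k},\cdot)$ sends both points to a common value, so that $Z_{\sigma_k+1}=\tilde Z_{\sigma_k+1}$. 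Thus $F_k^{c}\supseteq J_{\sigma_k+1}(y_{\sigma_k})$, and in the notation of Lemma \ref{t}, $J_{\sigma_k+1}(y_{\sigma_k})=\{U_{\sigma_k+1}\le\alpha(\|y_{\sigma_k}\|)\}$, whence $F_k\subseteq\{U_{\sigma_k+1}>\alpha(\|y_{\sigma_k}\|)\}$.

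Finally comes the step I expect to be the main obstacle: \textbf{assembling the independent factors across the random return times}. The $\sigma_k$ are stopping times for the filtration generated by $X_0$ and $(U_i,\varepsilon_i)_{i\le t}$, and the attempt indices are pairwise distinct, since $\sigma_{k+1}\ge\sigma_k+1$ forces $\sigma_{k+1}+1>\sigma_k+1$. Because $U_{\sigma_k+1}$ is independent of everything observed up to time $\sigma_k$, I would peel off the attempts one at a time: decomposing over $\{\sigma_{\vartheta-1}=s\}\in\mathcal G_s$ and using that $J_{s+1}$ depends only on $U_{s+1}$, which is independent of $\mathcal G_s$, gives $P(F_{\vartheta-1}\mid\mathcal G_{\sigma_{\vartheta-1}})\le 1-\alpha(H)$; iterating this conditioning downward through $k=\vartheta-2,\dots,1$ yields $P\big(\bigcap_{k=1}^{\vartheta-1}F_k\big)\le(1-\alpha(H))^{\vartheta-1}$. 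The concluding bound $e^{-(\vartheta-1)\alpha(H)}$ follows from $1-x\le e^{-x}$. The delicate point throughout is the bookkeeping with the random times: one must verify that conditioning on $\mathcal G_{\sigma_k}$ genuinely frees the fresh uniform $U_{\sigma_k+1}$ for the next attempt, which is precisely what the independence of the $\mathcal L_t$ in Lemma \ref{t} provides.
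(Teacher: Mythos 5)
Your proposal is correct and follows essentially the same route as the paper's own proof: permanence of coupling under the shared random maps $T_{t+1}(y_t,\cdot)$, containment of the non-coupling event in $\bigcap_{k=1}^{\vartheta-1}\{U_{\sigma_k+1}>\alpha(H)\}$ via the events $J_{\sigma_k+1}(y_{\sigma_k})$ of Lemma \ref{t}, then peeling off independent factors $1-\alpha(H)$ by conditioning on $\mathcal{G}_{\sigma_{\vartheta-1}},\mathcal{G}_{\sigma_{\vartheta-2}},\dots$, and finishing with $1-x\leq e^{-x}$. The paper's argument is just a terser rendering of the same steps (it works directly with the indicators $1_{\{U_{\sigma_k+1}>\alpha(H)\}}$ and the identity $E[1_{\{U_{\sigma_k+1}>\alpha(H)\}}\vert\mathcal{G}_{\sigma_k}]=1-\alpha(H)$), so there is nothing to add.
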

\begin{proof} For typographical reasons, we will write $\sigma(n)$ instead of $\sigma_n$ in this proof.
Notice that if $\omega\in\Omega$ is such that 
$\sigma(k)(\omega)<m+\tau$ and $T_{\sigma(k)(\omega)+1}(y_{\sigma(k)(\omega)+1},
\cdot,\omega)\in
\mathfrak{C}(R(H))$ then $Z_{\sigma(k)(\omega)+1}(\omega)=\tilde{Z}_{\sigma(k)(\omega)
+1}(\omega)$ hence also $Z_{m+\tau}(\omega)=\tilde{Z}_{m+\tau}(\omega)$. Recall the proof of Lemma \ref{t} and
estimate
\begin{eqnarray*}
P(Z_{m+\tau}\neq \tilde{Z}_{m+\tau},\ \sigma({\vartheta})\leq m+\tau) &\leq&\\
P(U_{\sigma(1)+1}>\alpha(H),\ldots, U_{\sigma({\vartheta-1})+1}>\alpha(H)) &=&\\
E[E[1_{\{U_{\sigma({\vartheta-1})+1}>\alpha(H)\}}\vert\mathcal{G}_{\sigma({\vartheta-1})}] 
1_{\{U_{\sigma(1)+1}>\alpha(H)\}}\cdots 1_{\{U_{\sigma(\vartheta-2)+1}>\alpha(H)\}}]. & &
\end{eqnarray*}
As easily seen, 
\begin{eqnarray*}
E[1_{\{U_{\sigma({\vartheta-1})+1}>\alpha(H)\}}
\vert\mathcal{G}_{\sigma({\vartheta-1})}] = (1-\alpha(H)).
\end{eqnarray*}
Iterating the above argument, we arrive at the statement of this lemma using
$1-x\leq e^{-x}$, $x\geq 0$.
\end{proof}

\begin{lemma}\label{mommo} 
Let $\phi\in\Phi(V^{\delta})$
for some $0<\delta\leq 1/2$. 
Then the process $\phi(Z_t)$, $t\in\mathbb{N}$ is $L$-mixing of order $p$ 
with respect to $(\mathcal{G}_t,\mathcal{G}^+_t)$, $t\in\mathbb{N}$, for all $1\leq p<1/\delta$. 
Furthermore, $\Gamma_{p}(\phi(Z))$, $M_{p}(\phi(Z))$ have upper bounds that do not depend on $\mathbf{y}$, only on $H$. 
\end{lemma}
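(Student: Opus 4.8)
The plan is to verify the three defining properties of $L$-mixing of order $p$ directly. Adaptedness is immediate: each $Z_t$ is obtained by iterating the maps $T_s(y_{s-1},\cdot)$, so $Z_t$ is $\mathcal{G}_t$-measurable and hence so is $\phi(Z_t)$. For the uniform moment bound $M_p(\phi(Z))$, I would use $\phi\in\Phi(V^\delta)$ to write $|\phi(Z_t)|^p\leq C(1+V^{p\delta}(Z_t))$; since $p\delta<1$ we have $V^{p\delta}\leq 1+V$, so $E[|\phi(Z_t)|^p]$ is controlled by $1+E[V(Z_t)]$, which Lemma \ref{bobo} bounds uniformly in $t$ by a constant depending only on $H$ (through $K(H)/\lambda(H)$ and $E[V(X_0)]$).

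The heart of the matter is the estimate on $\gamma_p(\phi(Z),\tau)$. Fix $t\geq\tau$ and set $m:=t-\tau$; the process $\tilde{Z}$ started from $\tilde{x}$ at time $m$ is $\mathcal{G}^+_m=\mathcal{G}^+_{t-\tau}$-measurable, so $\phi(\tilde{Z}_t)$ is an admissible comparison variable for Lemma \ref{fyffes}, giving $E^{1/p}[|\phi(Z_t)-E[\phi(Z_t)|\mathcal{G}^+_{t-\tau}]|^p]\leq 2E^{1/p}[|\phi(Z_t)-\phi(\tilde{Z}_t)|^p]$. Since $\phi(Z_t)-\phi(\tilde{Z}_t)$ vanishes on the coupling event $\{Z_t=\tilde{Z}_t\}$, I would insert the indicator $1_{\{Z_t\neq\tilde{Z}_t\}}$ and apply H\"older's inequality with a pair of conjugate exponents $r,r'$ chosen so that $pr<1/\delta$ (possible precisely because $p<1/\delta$ \emph{strictly}). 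This separates a moment factor $E^{1/r}[|\phi(Z_t)-\phi(\tilde{Z}_t)|^{pr}]$, bounded uniformly in $t$ by the same argument as for $M_p$ applied to both $Z$ and $\tilde{Z}$ (using Lemma \ref{bobo} again), from the coupling-failure probability $P^{1/r'}(Z_t\neq\tilde{Z}_t)$.

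It then remains to control $P(Z_t\neq\tilde{Z}_t)=P(Z_{m+\tau}\neq\tilde{Z}_{m+\tau})$. Splitting according to whether $\sigma_\vartheta>m+\tau$ or not and invoking Corollary \ref{dargay} together with Lemma \ref{fonay}, for $\tau\geq 1/\epsilon(H)$ one obtains a bound of the form $\exp(-\varrho(H)\tau/2)+e^{-(\vartheta-1)\alpha(H)}$, which decays geometrically in $\tau$ at a rate depending only on $H$. Combining the two factors, $\gamma_p(\phi(Z),\tau)$ decays geometrically for $\tau\geq 1/\epsilon(H)$, while for the finitely many smaller $\tau$ the trivial conditional-Jensen bound $\gamma_p(\phi(Z),\tau)\leq 2M_p(\phi(Z))$ suffices. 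Summing then yields $\Gamma_p(\phi(Z))<\infty$, with a bound depending only on $H$. The main obstacle is exactly the interaction between the unbounded growth of $\phi$ (of order $V^\delta$) and the coupling estimate: one cannot bound $|\phi(Z_t)-\phi(\tilde{Z}_t)|$ pointwise, so the H\"older split is indispensable, and it is the strict inequality $p<1/\delta$ that leaves room to choose $r>1$ keeping the moment factor finite while still extracting a genuine power of the coupling-failure probability.
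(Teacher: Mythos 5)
Your proof is correct and follows essentially the same route as the paper: Lemma \ref{fyffes} applied with the $\mathcal{G}_m^+$-measurable coupled process $\tilde{Z}$, a H\"older split of the coupling-failure indicator against the $V^{\delta}$-growth of $\phi$ (the paper fixes the extreme exponent pair $1/(p\delta)$, $1/(1-p\delta)$ where you allow any $r$ with $pr<1/\delta$, an inessential variation), the decomposition of $P(Z_{m+\tau}\neq\tilde{Z}_{m+\tau})$ via Corollary \ref{dargay} and Lemma \ref{fonay}, the trivial bound $\gamma_p\leq 2M_p$ for $\tau<1/\epsilon(H)$, and geometric summation. Your identification of the strict inequality $p<1/\delta$ as what makes the H\"older split possible is exactly the crux the paper exploits.
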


In the sequel we will use, without further notice, the following elementary inequalities for $x,y\geq 0$:
$$
(x+y)^r\leq 2^{r-1}(x^r+y^r)\mbox{ if }r\geq 1;\ (x+y)^r\leq x^r+y^r\mbox{ if }0<r<1.
$$

\begin{proof}[Proof of Lemma \ref{mommo}.] 
Clearly, 
\begin{equation*}
M_{1/\delta}(\phi(Z))\leq \tilde{C}\left[1+\left(E[V(X_0)]+\frac{K(H)}{\lambda(H)}\right)^{\delta}\right],
\end{equation*}
by Lemma \ref{bobo}. Also,
$$
M_p(\phi(Z))\leq M_{1/\delta}(\phi(Z)),
$$
for all $1\leq p<1/\delta$.

Now we turn to establishing a bound for $\Gamma_p(\phi(Z))$.
Since $\tilde{Z}_m$ is deterministic, $\tilde{Z}_{m+\tau}$ is $\mathcal{G}_m^+$-measurable. Lemma \ref{fyffes} implies that, 
for $\tau\geq 1$,
\begin{eqnarray}\nonumber
E^{1/p}[|\phi(Z_{m+\tau})-E[\phi(Z_{m+\tau})\vert\mathcal{G}_{m}^+]|^p] &\leq&\\
\nonumber 2E^{1/p}[|\phi(Z_{m+\tau})-\phi(\tilde{Z}_{m+\tau})|^p] &\leq&\\ 
\nonumber 2E^{1/p}[(|\phi(Z_{m+\tau})|+|\phi(\tilde{Z}_{m+\tau})|)^p 1_{\{Z_{m+\tau}\neq \tilde{Z}_{m+\tau}\}}] &\leq&\\
2 E^{\delta}[(|\phi(Z_{m+\tau})|+|\phi(\tilde{Z}_{m+\tau})|)^{1/\delta}]P^{\frac{1-p\delta}{p}}(Z_{m+\tau}\neq \tilde{Z}_{m+\tau}), & &
\label{orlando}
\end{eqnarray}
using H\"older's inequality with the exponents $1/(p\delta)$ and $1/(1-p\delta)$. By Lemma \ref{bobo}, 
\begin{eqnarray}\nonumber
E^{\delta}[(|\phi(Z_{m+\tau})|+|\phi(\tilde{Z}_{m+\tau})|)^{1/\delta}] &\leq&\\
\nonumber \tilde{C}\left[1+\left( E[V(X_0)]+\frac{K(H)}{\lambda(H)}\right)^{\delta}\right] &+&\\
\tilde{C}\left[1+\left( V(\tilde{x})+\frac{K(H)}{\lambda(H)}\right)^{\delta}\right] 
&\leq&
\check{C}\left[\frac{K(H)}{\lambda(H)}\right]^{\delta},\label{nape}
\end{eqnarray}
for some suitable $\check{C}>0$. Since
$$
P(Z_{m+\tau}\neq \tilde{Z}_{m+\tau})\leq P(Z_{m+\tau}\neq \tilde{Z}_{m+\tau},\ \sigma_{\vartheta}\leq m+\tau)
+ P(\sigma_{\vartheta}> m+\tau),
$$
we obtain from Lemma \ref{fonay} and Corollary \ref{dargay} that for $\tau$ satisfying \eqref{thor},
\begin{eqnarray*}& &
\gamma_p(\phi(Z),\tau) \\ 
&\leq& 2\check{C}\left(\frac{K(H)}{\lambda(H)}\right)^{\delta} \left[
\exp\left(-\alpha(H)[\epsilon(H)\tau-1](1-p\delta)/p\right)+ 
\exp\left(-\frac{\varrho(H)\tau}{2}(1-p\delta)/p\right)\right], 
\end{eqnarray*}
noting that the estimates of Lemma \ref{fonay} and Corollary \ref{dargay} do not
depend on the choice of $m$. For each integer
$$
1\leq \tau<1/\epsilon(H),
$$ 
we will apply the trivial estimate 
$$
\gamma_p(\phi(Z),\tau)\leq 2M_p(\phi(Z))\leq 2M_{1/\delta}(\phi(Z))\leq 2\check{C}\left[\frac{K(H)}{\lambda(H)}\right]^{\delta},
$$
recall \eqref{nape}. Hence
\begin{eqnarray}\nonumber
\Gamma_p(\phi(Z))\leq 2\check{C}\frac{1}{\epsilon(H)}\left(\frac{K(H)}{\lambda(H)}\right)^{\delta} &+&\\
\nonumber  2\check{C}\sum_{\tau\geq 1/\epsilon(H)}
\left[
\exp\left(-\alpha(H)[\epsilon(H)\tau-1](1-p\delta)/p\right)+ 
\exp\left(-\frac{\varrho(H)\tau}{2}(1-p\delta)/p\right)\right]\left(\frac{K(H)}{\lambda(H)}\right)^{\delta} 
&\leq&\\ 
\nonumber c'\left[\frac{1}{\epsilon(H)}+\frac{\exp\left({\alpha(H)}(1-p\delta)/p\right)}
{1-\exp\left(-\alpha(H)\epsilon(H)(1-p\delta)/p\right)}+
\frac{1}{1-\exp\left( -\frac{\varrho(H)(1-p\delta)}{2p}\right)}\right]\left(\frac{K(H)}{\lambda(H)}\right)^{\delta} &\leq&\\
\nonumber c''\left[\frac{1}{\alpha(H)\epsilon(H)}+
\frac{1}{\lambda(H)}\right]
\left(\frac{K(H)}{\lambda(H)}\right)^{\delta} &\leq&\\
c'''\frac{|\ln(\lambda(H))|}{\alpha(H)\lambda(H)}\left(\frac{K(H)}{\lambda(H)}\right)^{\delta} 
\label{oo} & &
\end{eqnarray}
with some $c',c'',c'''>0$, using elementary properties
of the functions $x\to 1/(1-e^{-x})$ and $x\to \ln(1+x)$. The $L$-mixing property of order $p$ follows. (Note, however, that $c'''$
depends on $p$, $\delta$ as well as on $E[V(X_0)]$.)
\end{proof}

\begin{proof}[Proof of Theorem \ref{lln}.] Now we start signalling the dependence of $Z$ on
$\mathbf{y}$ and hence write $Z_t^{\mathbf{y}}$, $t\in\mathbb{N}$. 
Note that
the law of $Z_t^{\mathbf{Y}}$, $t\in\mathbb{N}$ equals that of $X_t$, $t\in\mathbb{N}$,
by construction of $Z$ and by Remark \ref{kund}.

For $t\in\mathbb{N}$ and $\mathbf{y}\in\mathfrak{Y}$, define $\psi_{t}(\mathbf{y}):=E[\phi(Z^{\mathbf{y}}_{t})]$ and
$W_t(\mathbf{y}):=\phi(Z^{\mathbf{y}}_t)-\psi_{t}(\mathbf{y})$.
Clearly, $W_{t}(\mathbf{y})$ is a zero-mean process.
  
Fix $p\geq 2$.  Fix $N\in\mathbb{N}$ for the moment. In the particular case where $\mathbf{y}$ satisfies $|y_j|\leq g(N)$, $j\in\mathbb{N}$,
the process  $W_t(\mathbf{y})$,
$t\in\mathbb{N}$ is $L$-mixing by Lemma \ref{mommo} and Remark \ref{utu}. Hence Lemma \ref{inek} implies
\begin{eqnarray*}\nonumber 
E^{1/p}\left[\left|\frac{W_1(\mathbf{y})+\ldots+W_N(\mathbf{y})}{N}\right|^p\right] &\leq&\\
\frac{C_p M_p^{1/2}(W(\mathbf{y}))\Gamma_p^{1/2}(W(\mathbf{y}))}{N^{1/2}}&\leq&\\
\frac{C_p M_{1/\delta}^{1/2}(W(\mathbf{y}))\Gamma_p^{1/2}(W(\mathbf{y}))}{N^{1/2}}&\leq&\\
\frac{2C_p \sqrt{\check{C}}   [K(g(N))/\lambda(g(N))]^{\delta/2} \sqrt{c'''}[K(g(N))/\lambda(g(N))]^{\delta/2} 
\pi^{1/2}(N)}{N^{1/2}}, & &\label{ae}
\end{eqnarray*}
by \eqref{nape} and \eqref{oo}; recall also Remark \ref{utu}. Fix $\tilde{y}\in A_0$ and define 
$$
\tilde{Y}_j:={Y}_j,\mbox{ if }{Y}_j\in A_{g(N)},\
\tilde{Y}_j:=\tilde{y},\mbox{ if }{Y}_j\notin A_{g(N)}.
$$
Let $\tilde{\mathbf{Y}}=(\tilde{Y}_{j})_{j\in\mathbb{Z}}\in
\mathfrak{Y}$.
Note that, by $\phi\in\Phi(V^{\delta})$,
$$
E^{\delta}[|W_j(\mathbf{Y})|^{1/\delta}]\leq 2\tilde{C}(1+E^{\delta}[V(X_j)]),\ j\geq 1.
$$
Estimate, using H\"older's inequality with exponents $1/(\delta p)$, $1/(1-\delta p)$,
\begin{eqnarray}
\nonumber E^{1/p}\left[\left|\frac{W_1(\mathbf{Y})+\ldots+W_N(\mathbf{Y})}{N}\right|^p\right] &\leq&\\  
\nonumber E^{1/p}\left[\left|\frac{W_1(\tilde{\mathbf{Y}})+\ldots+W_N(\tilde{\mathbf{Y}})}{N}\right|^p\right] 
 &+&\\
\nonumber M_{1/\delta}(W(\mathbf{Y}))P^{\frac{1-p\delta}{p}}((\tilde{Y}_1,\ldots,\tilde{Y}_N)\neq (Y_1,\ldots,Y_N)) &\leq&\\ 
\nonumber\frac{C'[K(g(N))/\lambda(g(N))]^{\delta} 
\pi^{1/2}(N)}{N^{1/2}}+C'\left(1+\sup_{n\in\mathbb{N}} E[V(X_n)]\right)^{\delta}\ell^{\frac{1-p\delta}{p}}(N) 
&\leq &\\
\label{ollala}\frac{C''[K(g(N))/\lambda(g(N))]^{\delta} 
\pi^{1/2}(N)}{N^{1/2}}+C''\ell^{\frac{1-p\delta}{p}}(N),
\end{eqnarray}
with some constants $C',C''>0$, by Lemma \ref{evx}. Here we have also used the fact that if $(\tilde{Y}_1,\ldots,\tilde{Y}_N)=(Y_1,\ldots,Y_N)$
then also $W_{j}(\mathbf{Y})=W_{j}(\tilde{\mathbf{Y}})$, $j=1,\ldots,N$.
 
Recall the notation $\hat{\mathbf{Y}}_{n}:=(Y_{j+n})_{j\in\mathbb{Z}}$ and the definition of 
$\mu_{n}(\mathbf{y})$ from \eqref{dafie}. Recall also the functional $\Psi_{\phi}$ from
\eqref{defie}.
Now we can estimate
\begin{eqnarray}\nonumber
& & \left|\int_{\mathcal{X}} \phi(z)\mu_*(dz)-\frac{\sum_{j=1}^{N}\phi(Z_j^{\mathbf{Y}})}{N}\right|\\ 
\nonumber &\leq & \left|\int_{\mathcal{X}} \phi(z)\mu_*(dz)-\frac{\sum_{j=1}^{N}\Psi_{\phi}(\hat{\mathbf{Y}}_{j})}{N}\right|\\
\nonumber &+& \left|\frac{\sum_{j=1}^{N}\Psi_{\phi}(\hat{\mathbf{Y}}_{j})}{N}-\frac{\sum_{j=1}^{N}\psi_{j}(\hat{\mathbf{Y}}_{j})}{N}\right|\\
&+& \left|\frac{\sum_{j=1}^{N}\psi_{j}(\hat{\mathbf{Y}}_{j})}{N}-\frac{\sum_{j=1}^{N}\phi(Z_j^{\mathbf{Y}})}{N}\right|.
 \label{matee}
\end{eqnarray}
Notice that the law of 
$Z_{j}^{\mathbf{y}}$ equals $\mu_{j}(\hat{\mathbf{y}}_{j})$ so 
the third term on the right-hand side of \eqref{matee} equals
$$\left|\frac{W_1(\mathbf{Y})+\ldots+W_N(\mathbf{Y})}{N}\right|$${}
hence it converges to $0$ in probability by \eqref{ollala}.

By stationarity, we get
$$
E|\psi_{j}(\hat{\mathbf{Y}}_{j})-\Psi_{\phi}(\hat{\mathbf{Y}}_{j})|=E|\psi_{j}(\mathbf{Y})-\Psi_{\phi}(\mathbf{Y})|\to 0
$$
as $j\to\infty$, see the proof of Theorem \ref{limit}, so the second term also tends to $0$ in probability. 


Finally, Birkhoff's theorem and the ergodicity of the process $Y$ imply that
\begin{eqnarray*}
\frac{\sum_{j=1}^{N}\Psi_{\phi}(\hat{\mathbf{Y}}_{j})}{N}\to \int_{\mathcal{X}}\phi(z)\mu_{*}(dz),\ N\to\infty,
\end{eqnarray*}
almost surely, hence also in probability, noting Remark \ref{wekings}. This shows that the first
term on the right-hand side of \eqref{matee} also vanishes. To sum up, 
$$
\left|\int_{\mathcal{X}} \phi(z)\mu_*(dz)-\frac{\sum_{j=1}^{N}\phi(X_j)}{N}\right|\to 0
$$
in probability, recalling that the laws of $Z^{\mathbf{Y}}_{n}$, $n\in\mathbb{N}$ and $X_{n}$, $n\in\mathbb{N}$
coincide.

To show convergence in $L^{p}$, it suffices to check the uniform integrability of the family of 
random variables $V^{\delta p}(X_{n})$, $n\in\mathbb{N}$
since $\phi\in\Phi(V^{\delta})$. This follows from $p<1/\delta$ and from Lemma \ref{evx}. 
The theorem has been shown for $p\geq 2$ but this implies the result for $1\leq p<2$, too. 
\end{proof}

\begin{remark}\label{speed}
{\rm In \eqref{matee} in the proof of Theorem \ref{lln} we can find estimates for the $L^{p}$ convergence rate
for every term except for
$$
\mathfrak{e}(N):=\left|\frac{\sum_{j=1}^{N}\Psi_{\phi}(\hat{\mathbf{Y}}_{j})}{N}- 
\int_{\mathcal{X}}\phi(z){}
\mu_{*}(dz)
\right|.
$$
Making suitably strong (mixing) assumptions about the process $Y$, however, this term can also be estimated.
In the ideal case, $E^{1/p}[\mathfrak{e}_{N}^{p}]$ is of the order $1/\sqrt{N}$.} 
\end{remark}

\section{Proofs of ergodicity II}\label{mehi}

\begin{proof}[Proof of Theorem \ref{llnn}.] This follows very closely the proof of Theorem \ref{lln}, we only
point out the differences. Denote by $S$ an upper bound for $|\phi|$.
Take an arbitrary $p\geq 2$. We may use the H\"older inequality with exponents $1$ and $\infty$ in 
the estimates \eqref{orlando}. This leads to 
$$
\Gamma_p(\phi(Z))\leq c'''\frac{|\ln(\lambda(H))|}{\alpha(H)\lambda(H)},
$$
using the argument of \eqref{oo}. Then the proof of convergence in probability can be completed as above. Note that, instead of  
$$
M_{1/\delta}(W(\mathbf{Y}))P^{\frac{1-p\delta}{p}}((\tilde{Y_1},\ldots,\tilde{Y}_N)\neq (Y_1,\ldots,Y_N))
$$
we may write
$$
SP((\tilde{Y_1},\ldots,\tilde{Y}_N)\neq (Y_1,\ldots,Y_N))\leq S\ell(N)
$$
in \eqref{ollala}. As $\phi$ is bounded, $L^{p}$ convergence for all $p\geq 1$ also follows. 
\end{proof}

\begin{example}{\rm Let $X_t$, 
$t\in\mathbb{N}$ be a 
$\mathcal{X}$-valued
\emph{Markov chain} with $X_0=x_0$, where $\mathcal{X}$ is a Polish space with
Borel field $\mathfrak{B}$. Denoting the transition kernel of $X$ by
$Q(x,A)$, $x\in\mathcal{X}$, $A\in\mathfrak{B}$, we impose two standard
assumptions (see \cite{mt,hm}) for geometric ergodicity:
\begin{equation*}
[QV](x)\leq (1-\lambda)V(x)+K,\ x\in\mathcal{X},
\end{equation*}
for some measurable function $V:\mathcal{X}\to\mathbb{R}_{+}$, $0<\lambda\leq 1$,
$K>0$ and 
\begin{equation*}
\inf_{x\in C} Q(x,A)\geq \alpha \nu(A),\ A\in\mathfrak{B},
\end{equation*}
for some probability $\nu$, constant $\alpha>0$ and 
$$
C:=\{x\in\mathcal{X}:\, V(x)\leq 4K/\lambda\}.
$$
Under these assumptions, the process $X$ fits our framework above
(choosing $\mathcal{Y}$ to be a singleton) and 
the arguments of Lemma \ref{mommo} show that, for $0< \delta\leq 1/2$ and for 
any $\phi\in\Phi(V^{\delta})$, the process $\phi(X_t)$ is $L$-mixing of order $p$ for each
$1\leq p<1/\delta$. Furthermore, $$
M_p(\phi(X))+\Gamma_p(\phi(X))\leq \bar{c}[1+V^{\delta}(x_0)]
$$
for some $\bar{c}=\bar{c}(p)>0$. When $\phi$ is bounded, the same holds for each
$p\geq 1$. 

Although this result forms a very particular case of our framework, it is new and of considerable interest: on one hand, it establishes a useful 
mixing property for functionals of a wide class of Markov processes; on the other
hand, it underlines the versatility of the concept of $L$-mixing by providing one more relevant class
of examples satisfying this notion of mixing.}
\end{example}

\noindent\textbf{Acknowledgments.} Both authors enjoyed the support of 
the NKFIH (National Research, Development and Innovation Office, Hungary) 
grant KH 126505. The first author 
was also supported by the NKFIH grant PD 121107; the second
author by the ``Lend\"ulet'' grant LP 2015-6 of the
Hungarian Academy of Sciences and
by The Alan Turing Institute, London under the EPSRC grant EP/N510129/1. We thank Attila Lovas for
pointing out two mistakes and for suggesting improvements. The paper also benefitted from comments by
Nicolas Brosse, \'Eric Moulines, Sotirios Sabanis and Ramon van Handel.

\end{document}